\numberwithin{equation}{section}
\newtheorem{thm}{Theorem}[section]
\newtheorem{prop}[thm]{Proposition}
\newtheorem{lem}[thm]{Lemma}
\theoremstyle{remark}
\newtheorem{rem}[thm]{Remark}
\newcommand{\N}{{\mathbb N}}
\newcommand{\Q}{{\mathbb Q}}
\newcommand{\R}{{\mathbb R}}
\newcommand{\cB}{{\mathcal B}}
\newcommand{\cC}{{\mathcal C}}
\newcommand{\cE}{{\mathcal E}}
\newcommand{\cF}{{\mathcal F}}
\newcommand{\cH}{{\mathcal H}}
\newcommand{\cK}{{\mathcal K}}
\newcommand{\cL}{{\mathcal L}}
\newcommand{\cM}{{\mathcal M}}
\newcommand{\cN}{{\mathcal N}}
\newcommand{\cP}{{\mathcal P}}
\newcommand{\bF}{{\mathbf F}}
\renewcommand{\a}{\alpha}
\renewcommand{\b}{\beta}
\newcommand{\gm}{\gamma}
\newcommand{\dl}{\delta}
\newcommand{\eps}{\varepsilon}
\newcommand{\zt}{\zeta}
\newcommand{\lm}{\lambda}
\newcommand{\h}{\eta}
\newcommand{\sg}{\sigma}
\newcommand{\ph}{\varphi}
\newcommand{\om}{\omega}
\newcommand{\Sg}{\Sigma}
\newcommand{\Om}{\Omega}
\newcommand{\la}{\langle}
\newcommand{\ra}{\rangle}
\newcommand{\bone}{{\mathbf 1}}
\newcommand{\maruM}{{\stackrel{\circ}{\cM}}}
\begin{document}
\allowdisplaybreaks
\begin{frontmatter}

\title{Martingale dimensions for fractals}
\runtitle{Martingale dimensions for fractals}


\author{\fnms{Masanori} \snm{Hino}\thanksref{t1}\ead[label=e1]{hino@i.kyoto-u.ac.jp}}
\thankstext{t1}{Supported in part by the Ministry of Education, Culture, Sports, Science and Technology, Grant-in-Aid for Encouragement of Young Scientists, 18740070.}
\address{Graduate School of Informatics\\
Kyoto University\\
Kyoto 606-8501, Japan\\
\printead{e1}}

\runauthor{Masanori Hino}

\begin{abstract}
We prove that the martingale dimensions for canonical diffusion processes on a class of self-similar sets including nested fractals are always one.
This provides an affirmative answer to the conjecture of S.~Kusuoka~[Publ.\ Res.\ Inst.\ Math.\ Sci.\ \textbf{25} (1989) 659--680].
\end{abstract}

\begin{keyword}[class=AMS]
\kwd[Primary ]{60J60}
\kwd[; secondary ]{28A80}
\kwd{31C25}
\kwd{60G44}
\end{keyword}

\begin{keyword}
\kwd{martingale dimension}
\kwd{Davis--Varaiya invariant}
\kwd{multiplicity of filtration}
\kwd{fractals}
\kwd{self-similar sets}
\kwd{Dirichlet forms}
\end{keyword}

\end{frontmatter}
\section{Introduction}
The martingale dimension, which is also known as the Davis--Varaiya invariant~\cite{DV} or the multiplicity of filtration, is defined for a filtration on a probability space and represents a certain index for random noises.
Let $(\Omega,\cF,P)$ be a probability space and $\bF=\{\cF_t\}_{t\in[0,\infty)}$ be a filtration on it. 
Informally speaking, the martingale dimension for $\bF$ is the minimal number of martingales $\{M^1,M^2,\ldots\}$ with the property that an arbitrary $\bF$-martingale
$X$ has a stochastic integral representation of the following type: 
\[
X_t=X_0+\sum_{i}\int_0^t \ph^i_s\,dM^i_s.
\]
When $\bF$ is provided by the standard Brownian motion on $\R^d$, its martingale dimension is $d$.
Kusuoka~\cite{Ku1} has proved a remarkable result that when $\bF$ is induced by the canonical diffusion process on the $d$-dimensional Sierpinski gasket, there exists one
martingale additive functional $M$ such that every
martingale additive functional with finite energy is a stochastic integral
of $M$. We will say that the AF-martingale dimension
of $\bF$ is one.
(For remarks about the connection between the
Davis--Varaiya invariant and the
AF-martingale dimension, see comments below Theorem~\ref{thm:main}.)
He has also conjectured that each nested fractal has the same property.
However, with the exception of a few related studies such as \cite{Ku2}, no significant progress has been made thus far with regard to the problem of determining the martingale dimensions for concrete examples of fractals.

In this paper, we solve this problem for a class of fractals including nested fractals by proving that the AF-martingale dimensions are one.
Our method of the proof is different from that of Kusuoka.

This paper is organized as follows.
In Section~2, we provide a framework for the main theorem, and a key proposition is proved in Section~3.
Section~4 describes the analysis of AF-martingale dimensions and the proof of the main theorem. 
In Section~5, we remark on the key proposition.
\section{Framework}
In this section, we provide a framework of Dirichlet forms on self-similar sets according to \cite{Ki}.
Let $K$ be a compact metrizable topological space, $N$ be an integer greater than one, and set $S=\{1,2,\ldots,N\}$.
Further, let $\psi_i\colon K\to K$ be a continuous injective map for $i\in S$.
Set $\Sg=S^\N$. For $i\in S$, we define a shift operator $\sg_i\colon \Sg\to\Sg$ by $\sg_i(\om_1\om_2\cdots)=i\om_1\om_2\cdots$.
Suppose that there exists a continuous surjective map $\pi\colon \Sg\to K$ such that $\psi_i\circ \pi=\pi\circ\sg_i$ for every $i\in S$.
We term $\cL=(K,S,\{\psi_i\}_{i\in S})$ a self-similar structure.

We also define $W_0=\{\emptyset\}$, $W_m=S^m$ for $m\in \N$, and
denote $\bigcup_{m\ge0}W_m$ by $W_*$.
For $w=w_1w_2\cdots w_m\in W_m$, we define $\psi_w=\psi_{w_1}\circ\psi_{w_2}\circ\cdots\circ\psi_{w_m}$, $\sg_w=\sg_{w_1}\circ\sg_{w_2}\circ\cdots\circ\sg_{w_m}$, $K_w=\psi_w(K)$, and $\Sg_w=\sg_w(\Sg)$.
For $w=w_1w_2\cdots w_m\in W_w$ and $w'=w'_1w'_2\cdots w'_{m'}\in W_{w'}$, $ww'$ denotes $w_1w_2\cdots w_mw'_1w'_2\cdots w'_{m'}\in W_{m+m'}$.
For $\om=\om_1\om_2\cdots\in\Sg$ and $m\in\N$, $[\om]_m$ denotes $\om_1\om_2\cdots\om_m\in W_m$.

We set 
\[
\cP=\bigcup_{m=1}^\infty \sg^m\left(\pi^{-1}\left(\bigcup_{i,j\in S,\,i\ne j}(K_i\cap K_j)\right)\right)\quad\text{and}\quad V_0=\pi(\cP),
\]
where $\sg^m\colon\Sigma\to\Sigma$ is a shift operator that is defined by $\sg^m(\om_1\om_2\cdots)=\om_{m+1}\om_{m+2}\cdots$.
The set $\cP$ is referred to as the post-critical set.
In this paper, we assume that $K$ is connected and the self-similar structure $(K,S,\{\psi_i\}_{i\in S})$ is post-critically finite, that is, $\cP$ is a finite set.

A nested fractal (\cite{Li}) is a typical example of post-critically finite self-similar structures.
For convenience, we explain this concept
(see \cite[p.~117]{Ki} for further comments).
Let $\alpha>1$ and $\psi_i$, $i\in S$, be an $\alpha$-similitude in $\R^d$.
That is, $\psi_i(x)=\alpha^{-1}(x-x_i)+x_i$ for some $x_i\in\R^d$.
There exists a unique nonempty compact set $K$ in $\R^d$ such that $K=\bigcup_{i\in S}\psi_i(K)$.
We assume the following open set condition: there exists a nonempty open set $U$ of $\R^d$ such that $\bigcup_{i\in S}\psi_i(U)\subset U$ and $\psi_i(U)\cap\psi_j(U)=\emptyset$ for any distinct $i,j\in S$.
Let $F_0$ be the set of all fixed points of $\psi_i$'s, $i\in S$.
Then, $\#F_0=N$ (see \cite[Corollary~1.9]{Ku2}).
An element $x$ of $F_0$ is termed an essential fixed point if there exist $i,j\in S$ and $y\in F_0$ such that $i\ne j$ and $\psi_i(x)=\psi_j(y)$.
The set of all essential fixed points is denoted by $F$.
We refer to $\psi_w(F)$ for $w\in W_n$ as an $n$-cell.
For $x,y\in \R^d$ with $x\ne y$, let $H_{xy}$ denote the hyperplane in $\R^d$ defined as $H_{xy}=\{z\in\R^d\mid|x-z|=|y-z|\}$.
Let $g_{xy}\colon\R^d\to\R^d$ be the reflection in $H_{xy}$.
We call $K$ a nested fractal if the following conditions hold:
\begin{itemize}
\item $\# F\ge2$;
\item (Connectivity) for any two 1-cells $C$ and $C'$, there exists a sequence of 1-cells $C_i$ $(i=0,\ldots,k)$ such that $C_0=C$, $C_k=C'$ and $C_{i-1}\cap C_i\ne\emptyset$ for all $i=1,\ldots,k$;
\item (Symmetry) for any distinct $x,y\in F$ and $n\ge0$, $g_{xy}$ maps $n$-cells to $n$-cells and any $n$-cell that contains elements on both sides of $H_{xy}$ to itself;
\item (Nesting) for any $n\ge1$ and distinct $w,w'\in W_n$, $\psi_w(K)\cap\psi_{w'}(K)=\psi_w(F)\cap\psi_{w'}(F)$.
\end{itemize}
Then, the triplet $(K,S,\{\psi_i\}_{i\in S})$ is a post-critically finite self-similar structure and $V_0=F$.
Figure~\ref{fig:1} shows some typical examples of nested fractals $K$.
The bottom right part is the three-dimensional Sierpinski gasket that is realized in $\R^3$, and the rest are realized in $\R^2$.
\begin{figure}[ht]
\centerline{\epsfig{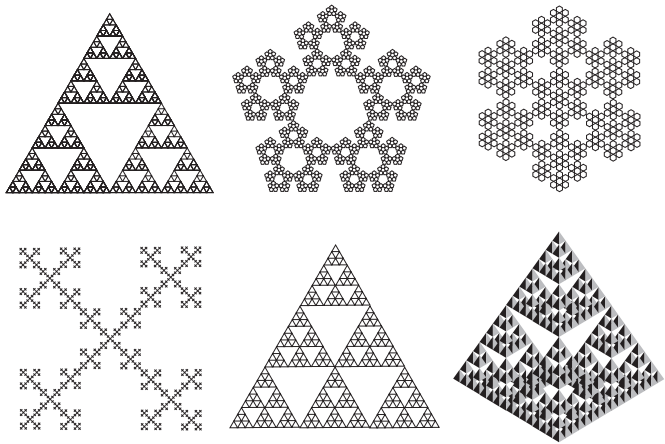}}
\caption{Examples of nested fractals.}
\label{fig:1}
\end{figure}

We resume our discussion for the general case.
For a finite set $V$, let $l(V)$ be the space of all real-valued functions on $V$.
We equip $l(V)$ with an inner product $(\cdot,\cdot)$   defined by $(u,v)=\sum_{p\in V}u(p)v(p)$.
Let $D=(D_{pp'})_{p,p'\in V_0}$ be a symmetric linear operator on $l(V_0)$ (also considered to be a square matrix with size $\#V_0$) such that the following conditions hold:
\begin{enumerate}
\item[(D1)] $D$ is nonpositive definite,
\item[(D2)] $Du=0$ if and only if $u$ is constant on $V_0$,
\item[(D3)] $D_{pp'}\ge0$ for all $p\ne p'\in V_0$.
\end{enumerate}
We define $\cE^{(0)}(u,v)=(-Du,v)$ for $u,v\in l(V_0)$.
This is a Dirichlet form on $l(V_0)$, where $l(V_0)$ is identified with the $L^2$ space on $V_0$ with the counting measure (\cite[Proposition~2.1.3]{Ki}).
Let $V_m=\bigcup_{w\in S^m}\psi_w(V_0)$ for $m\ge1$D
For $r=\{r_i\}_{i\in S}$ with $r_i>0$, we define a bilinear form $\cE^{(m)}$ on $l(V_m)$ as
\begin{equation}\label{eq:em}
  \cE^{(m)}(u,v)=\sum_{w\in W_m}\frac{1}{r_w}\cE^{(0)}(u\circ\psi_w|_{V_0},v\circ\psi_w|_{V_0}),\quad
  u,v\in l(V_m).
\end{equation}
Here, $r_w=r_{w_1}r_{w_2}\cdots r_{w_m}$ for $w=w_1w_2\cdots w_m$.
We refer to $(D,r)$ as a harmonic structure if $\cE^{(0)}(u|_{V_0},u|_{V_0})\le \cE^{(1)}(u,u)$ for every  $u\in l(V_{1})$.
Then, for $m\ge0$ and $u\in l(V_{m+1})$, we obtain $\cE^{(m)}(u|_{V_m},u|_{V_m})\le \cE^{(m+1)}(u,u)$.

We fix a harmonic structure that is regular, namely, $0<r_i<1$ for all $i\in S$.
Several studies have been conducted on the existence of regular harmonic structures. 
We only focus on the fact that all nested fractals have regular harmonic structures (\cite{Ki,Ku2,Li}); we do not go into further details in this regard.

Let $\mu$ be a Borel probability measure on $K$ with full support.
We can then define a regular local Dirichlet form $(\cE,\cF)$ on $L^2(K,\mu)$ as
\begin{align*}
\cF&=\{u\in C(K)\subset L^2(K,\mu)\mid
\lim_{m\to\infty}\cE^{(m)}(u|_{V_m},u|_{V_m})<\infty\},\\
\cE(u,v)&= \lim_{m\to\infty}\cE^{(m)}(u|_{V_m},v|_{V_m}),\quad u,v\in\cF.
\end{align*}
The space $\cF$ becomes a separable Hilbert space when it is equipped with the inner product $\la f,g \ra_\cF=\cE(f,g)+\int_K fg\,d\mu$.
We use $\cE(f)$ instead of $\cE(f,f)$.

For a map $\psi\colon K\to K$ and a function $f\colon K\to\R$, $\psi^* f$ denotes the pullback of $f$ by $\psi$, that is, $\psi^* f=f\circ \psi$.
The Dirichlet form $(\cE,\cF)$ satisfies the following self-similarity:
\begin{align}\label{eq:selfsimilarity}
  \cE(f,g)=\sum_{i\in S}\frac1{r_i}\cE(\psi_i^* f,\psi_i^* g),\quad f,g\in\cF.
 \end{align}
For each $u\in l(V_0)$, there exists a unique function $h\in\cF$ such that $h|_{V_0}=u$ and $h$ attains the infimum of $\{\cE(g)\mid g\in\cF,\ g|_{V_0}=u\}$.
Such a function $h$ is termed a harmonic function.
The space of all harmonic functions is denoted by $\cH$.
By using the linear map $\iota\colon l(V_0)\ni u\mapsto h\in \cH$, we can identify $\cH$ with $l(V_0)$.
In particular, $\cH$ is a finite dimensional subspace of $\cF$.
For each $i\in S$, we define a linear operator $A_i\colon l(V_0)\to l(V_0)$ as $A_i=\iota^{-1}\circ \psi_i^* \circ \iota$.

For $m\ge0$, let $\cH_m$ denote the set of all functions $f$ in $\cF$ such that $\psi_w^*f\in\cH$ for all $w\in W_m$.
Let $\cH_*=\bigcup_{m\ge0}\cH_m$.
The functions in $\cH_*$ are referred to as piecewise harmonic functions.
\begin{lem}
 $\cH_*$ is dense in $\cF$. 
\end{lem}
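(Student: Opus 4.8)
The plan is to approximate an arbitrary $f\in\cF$ by its piecewise harmonic interpolants. For each $m\ge0$ let $h_m\in\cH_m$ be the unique function with $h_m|_{V_m}=f|_{V_m}$ that is harmonic on every $m$-cell; concretely, for each $w\in W_m$ the pullback $\psi_w^*h_m\in\cH$ is the harmonic function whose boundary values on $V_0$ are $(f\circ\psi_w)|_{V_0}$. Since $h_m\in\cH_m\subset\cH_*$, it suffices to show that $h_m\to f$ in the Hilbert space $(\cF,\la\cdot,\cdot\ra_\cF)$, that is, that both $\cE(f-h_m)\to0$ and $\|f-h_m\|_{L^2(K,\mu)}\to0$.

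For the energy I would first record the identity $\cE(h_m)=\cE^{(m)}(f|_{V_m})$: iterating the self-similarity \eqref{eq:selfsimilarity} $m$ times gives $\cE(h_m)=\sum_{w\in W_m}\frac1{r_w}\cE(\psi_w^*h_m)$, and since $\psi_w^*h_m$ is harmonic its energy equals $\cE^{(0)}$ of its boundary data, so comparison with \eqref{eq:em} yields the claim. By the definition of $\cF$ this quantity increases to $\cE(f)$ as $m\to\infty$. The decisive point is the orthogonality $\cE(h_m,f-h_m)=0$. Indeed $(f-h_m)|_{V_m}=0$, so for each $w\in W_m$ the function $\psi_w^*(f-h_m)\in\cF$ vanishes on $V_0$ (because $\psi_w(V_0)\subset V_m$); as $\psi_w^*h_m$ is harmonic, the first-variation characterization of energy minimizers gives $\cE\bigl(\psi_w^*h_m,\psi_w^*(f-h_m)\bigr)=0$, and summing the self-similar decomposition over $w\in W_m$ proves the orthogonality. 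Consequently
\[
\cE(f-h_m)=\cE(f)-2\cE(f,h_m)+\cE(h_m)=\cE(f)-\cE(h_m)\longrightarrow0 .
\]

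For the $L^2$-convergence I would prove the stronger statement $\|f-h_m\|_\infty\to0$. Fix a metric on $K$. Since $\Sg=S^\N$ is compact and $\pi$ is uniformly continuous, the cylinders shrink to points and hence $\max_{w\in W_m}\operatorname{diam}K_w\to0$. The Markovian property of $(\cE,\cF)$, equivalently condition (D3), provides a maximum principle for harmonic functions, so on each cell $K_w$ the oscillation of $h_m$ is controlled by the oscillation of $f$ on $\psi_w(V_0)\subset K_w$. Because $f$ is uniformly continuous on the compact set $K$ and $f=h_m$ on $V_m$, for $x\in K_w$ both $f(x)$ and $h_m(x)$ lie within the oscillation of $f$ over $K_w$, which tends to $0$ uniformly in $w\in W_m$. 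Thus $h_m\to f$ uniformly, hence in $L^2(K,\mu)$, and combining with the energy estimate gives $h_m\to f$ in $\cF$.

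The routine part is the energy identity together with the orthogonality, which follow directly from the self-similarity \eqref{eq:selfsimilarity} and the variational definition of $\cH$. I expect the main obstacle to be the $L^2$-convergence: it rests on the two quantitative inputs that the cell diameters shrink to zero and that harmonic functions obey a maximum principle, so that the uniform continuity of $f$ can be transferred into uniform control of the interpolants $h_m$.
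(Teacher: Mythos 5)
Your proof is correct and follows essentially the same route as the paper: approximate $f$ by the piecewise harmonic interpolants agreeing with $f$ on $V_m$, show $\cE(f-h_m)\to0$, and use the maximum principle together with uniform continuity of $f$ to get uniform (hence $L^2$) convergence. The paper simply cites Kigami's book (Lemma~3.2.17 and Theorem~3.2.5) for the two steps you prove by hand.
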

\begin{proof}
  Let $f\in\cF$. For $m\in\N$, let $f_m$ be a function in $\cH_m$ such that $f_m=f$ on $V_m$.
  Then, $\cE(f-f_m)\to 0$ as $m\to\infty$ by e.g.\ \cite[Lemma~3.2.17]{Ki}.
  From the maximal principle (\cite[Theorem~3.2.5]{Ki}), $f_m$ converges uniformly to $f$. In particular, $f_m\to f$ in $L^2(K,\mu)$ as $m\to\infty$.
  Therefore, $f_m\to f$ in $\cF$ as $m\to\infty$.
\end{proof}

For $f\in\cF$, we will construct a finite measure $\lm_{\la f\ra}$ on $\Sg$ as follows.
For each $m\ge0$, we define
\[
  \lm_{\la f\ra}^{(m)}(A)=2\sum_{w\in A}\frac1{r_w} \cE(\psi_w^* f), \qquad A\subset W_m.
\]
Then, $\lm_{\la f\ra}^{(m)}$ is a measure on $W_m$.
Let $A\subset W_m$ and $A'=\{wi\in W_{m+1}\mid w\in A,\,i\in S\}$. 
Then,
\begin{align*}
  \lm_{\la f\ra}^{(m+1)}(A')
  &= 2\sum_{w\in A}\sum_{i\in S}\frac1{r_{w i}} \cE(\psi_{wi}^* f)\\
  &= 2\sum_{w\in A} \frac1{r_w} \sum_{i\in S}\frac1{r_{i}} \cE(\psi_{i}^*\psi_{w}^* f)\\
  &= 2\sum_{w\in A} \frac1{r_w}  \cE(\psi_{w}^* f)
  \qquad\text{(by \eqref{eq:selfsimilarity})}\\
  &= \lm_{\la f\ra}^{(m)}(A).
\end{align*}
Therefore, $\{\lm_{\la f\ra}^{(m)}\}_{m\ge0}$ has a consistency condition.
We also note that $\lm_{\la f\ra}^{(m)}(W_m)=2\cE(f,f)<\infty$.
  According to the Kolmogorov extension theorem, there exists a unique Borel finite measure $\lm_{\la f\ra}$ on $\Sg$ such that $\lm_{\la f\ra}(\Sg_w)=\lm_{\la f\ra}^{(m)}(\{w\})$ for every $m\ge0$ and $w\in W_m$.
  For $f,g\in\cF$, we define a signed measure $\lm_{\la f,g\ra}$ on $\Sg$ by the polarization procedure; it is expressed as $\lm_{\la f,g\ra}=(\lm_{\la f+g\ra}-\lm_{\la f-g\ra})/4$.
  It is easy to prove that 
\begin{align}\label{eq:consistency}
\lm_{\la f,g\ra}(\Sg_{ww'})=r_w^{-1}\lm_{\la \psi_w^* f,\psi_w^* g\ra}(\Sg_{w'})
\end{align}
for any $f,g\in\cF$ and $w,w'\in W_*$.

  For $f\in\cF$, let $\mu_{\la f\ra}$ be the energy measure of $f$ on $K$ associated with the Dirichlet form $(\cE,\cF)$ on $L^2(K,\mu)$.
  That is, $\mu_{\la f\ra}$ is a unique Borel measure on $K$ satisfying
  \[
    \int_K  \ph\, d\mu_{\la f\ra}= 2\cE(f,f\ph)- \cE(f^2,\ph),
    \qquad \ph\in\cF\subset C(K).
  \]
We define $\mu_{\la f,g\ra}=(\mu_{\la f+g\ra}-\mu_{\la f-g\ra})/4$ for $f,g\in\cF$.
  In the same manner as the proof of \cite[Theorem~I.7.1.1]{BH},
  for every $f\in \cF$, the image measure of $\mu_{\la f\ra}$ by $f$ is proved to be absolutely continuous with respect to the one-dimensional Lebesgue measure.
  In particular, $\mu_{\la f\ra}$ has no atoms.
  From \cite[Lemma~4.1]{Hi}, the image measure of $\lm_{\la f\ra}$ by $\pi\colon\Sg\to K$ is identical to $\mu_{\la f\ra}$.
  Since $\{x\in K\mid \#\pi^{-1}(x)>1\}$ is a countable set, we obtain the following.
\begin{lem}\label{lem:measure}
For any $f\in\cF$, $\lm_{\la f\ra}(\{\om\in\Sg\mid \#\pi^{-1}(\pi(\om))>1\})=0$.
In particular, $(\Sg,\lm_{\la f\ra})$ is isomorphic to $(K,\mu_{\la f\ra})$ as a measure space by the map $\pi\colon\Sg\to K$.
\end{lem}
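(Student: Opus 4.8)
The plan is to reduce everything to the two facts just recorded: the pushforward identity $\pi_*\lm_{\la f\ra}=\mu_{\la f\ra}$ coming from \cite[Lemma~4.1]{Hi}, and the absence of atoms for $\mu_{\la f\ra}$. Write $B=\{x\in K\mid \#\pi^{-1}(x)>1\}$ for the (countable) set of points with multiple coding. The key observation is the set-theoretic identity $\{\om\in\Sg\mid \#\pi^{-1}(\pi(\om))>1\}=\pi^{-1}(B)$, which holds because membership of $\om$ in the left-hand set depends only on the value $\pi(\om)$.

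First I would dispose of the measure of $B$. Since $B$ is countable and $\mu_{\la f\ra}$ has no atoms, $\mu_{\la f\ra}(B)=0$. Transporting this through the pushforward identity yields $\lm_{\la f\ra}(\pi^{-1}(B))=(\pi_*\lm_{\la f\ra})(B)=\mu_{\la f\ra}(B)=0$, which is precisely the first assertion.

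For the ``in particular'' claim I would restrict $\pi$ to the conull Borel set $\Sg_0:=\Sg\setminus\pi^{-1}(B)$. By construction every $x\in K\setminus B$ has exactly one preimage, and that preimage lies in $\Sg_0$; combined with surjectivity of $\pi$ this shows that $\pi|_{\Sg_0}\colon\Sg_0\to K\setminus B$ is a continuous bijection. Both $\Sg=S^\N$ and $K$ are compact metrizable, hence standard Borel spaces, and $\Sg_0$, $K\setminus B$ are Borel subsets; by the Lusin--Souslin theorem the inverse of this injective Borel map is again Borel measurable. Thus $\pi|_{\Sg_0}$ is a Borel isomorphism between two conull subsets, and since it transports $\lm_{\la f\ra}$ to $\mu_{\la f\ra}$, it furnishes the required isomorphism of $(\Sg,\lm_{\la f\ra})$ and $(K,\mu_{\la f\ra})$ as measure spaces.

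The routine but essential point---the only place where any genuine input beyond the cited facts is needed---is the measurability of the inverse map, for which I expect the Lusin--Souslin theorem (the injective Borel image of a Borel set is Borel, with Borel inverse) to be the cleanest tool. Everything else is bookkeeping with null sets.
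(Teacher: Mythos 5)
Your argument is correct and is essentially the paper's own: the paper deduces the lemma from exactly the two facts you use (the pushforward identity $\pi_*\lm_{\la f\ra}=\mu_{\la f\ra}$ from \cite[Lemma~4.1]{Hi} and the non-atomicity of $\mu_{\la f\ra}$, combined with the countability of the multiple-coding set). The only difference is that you spell out the measure-space isomorphism via the Lusin--Souslin theorem, a detail the paper leaves implicit.
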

Hereafter, we assume the following:
\begin{itemize}
\item[$(*)$] Each $p\in V_0$ is a fixed point of $\psi_i$ for some $i\in S$ and $K\setminus \{p\}$ is connected.
\end{itemize}
This condition is a technical one; at present, we cannot remove this in order to utilize Lemma~\ref{lem:useful} below.
A typical example that does not satisfy $(*)$ is Hata's tree-like set (see \cite[Example~1.2.9]{Ki}).
Every nested fractal satisfies this condition (see e.g.\ \cite[Theorem~1.6.2 and Proposition~1.6.9]{Ki} for the proof).
We may and will assume that $V_0=\{p_1,\ldots,p_d\}$ and each $p_i$ is the fixed point of $\psi_i$, 
$i\in \{1,\ldots,d\}\subset S$.

  Let $i\in \{1,\ldots,d\}$. 
We recollect several facts on the eigenvalues and eigenfunctions of $A_i$ in order to use them later.
See \cite[Appendix~A.1]{Ki} and \cite{HN} for further details.
 Both $A_i$ and $\,^t\!A_i$ have 1 and $r_i$ as simple eigenvalues and the modulus of any other eigenvalue is less than $r_i$.
 Let $u_i$ be the column vector $(D_{pp_i})_{p\in V_0}$.
Then, $u_i$ is an eigenvector of $\,^t\!A_i$ with respect to $r_i$ (\cite[Lemma~5]{HN}).
We can take an eigenvector $v_i$ of $A_i$ with respect to $r_i$ so that all components of $v_i$ are nonnegative and $(u_i,v_i)=1$.
Since $v_i$ is not a constant vector, we have $-{}^t v_i D v_i>0$.

Let $\bone\in l(V_0)$ be a constant function on $V_0$ with value $1$.
Let $\tilde l(V_0)=\{u\in l(V_0)\mid (u,\bone)=0\}$ and $P\colon l(V_0)\to l(V_0)$ be the orthogonal projection on $\tilde l(V_0)$.
The following lemma is used in the next section.
\begin{lem}[{\cite[Lemmas~6 and 7]{HN}}]\label{lem:useful}
Let $i\in\{1,\ldots,d\}$ and $u\in l(V_0)$. Then,
\begin{enumerate}
\item $\lim_{n\to\infty}r_i^{-n}P A_i^{n}u
= (u_i,u)Pv_i$,
\item $\lim_{n\to\infty}r_i^{-n}\lm_{\la \iota(u)\ra}(\Sg_{\underbrace{\scriptstyle i\cdots i}_n})=-2(u_i,u)^2\,^t v_i Dv_i$.
\end{enumerate}
\end{lem}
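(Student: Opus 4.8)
The plan is to run both parts off the spectral decomposition of $A_i$ recalled just above the statement. Since $1$ and $r_i$ are simple eigenvalues of $A_i$ and every other eigenvalue has modulus strictly less than $r_i$, I would first pin down the associated spectral projections. Because the harmonic extension of a constant is constant, $\bone$ is a right eigenvector for the eigenvalue $1$; and since $u_i$ and $v_i$ are the left and right eigenvectors for $r_i$ normalized by $(u_i,v_i)=1$, the spectral projection onto the $r_i$-eigenspace is $u\mapsto(u_i,u)v_i$. Writing $Q$ for the spectral projection onto the eigenvalue-$1$ eigenspace (so that $Qu$ is a multiple of $\bone$), the decomposition gives, for every $u\in l(V_0)$,
\[
A_i^n u = Qu + (u_i,u)\,r_i^n v_i + R_n u,
\]
where $r_i^{-n}\|R_n u\|\to 0$ as $n\to\infty$, the polynomial factors from possible Jordan blocks being dominated since the remaining moduli are $<r_i$.

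For part $(1)$ I would multiply by $r_i^{-n}$ and apply $P$. As $P$ is the orthogonal projection onto $\tilde l(V_0)=\bone^{\perp}$, we have $P\bone=0$, hence $PQu=0$, so the term $r_i^{-n}PQu$ that would otherwise blow up (recall $0<r_i<1$) simply vanishes:
\[
r_i^{-n}P A_i^n u = r_i^{-n}PQu + (u_i,u)\,Pv_i + r_i^{-n}PR_n u \longrightarrow (u_i,u)\,Pv_i,
\]
which is the asserted limit.

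For part $(2)$ I would first convert the cylinder measure into an energy of a harmonic function. By the defining relation $\lm_{\la f\ra}(\Sg_w)=2r_w^{-1}\cE(\psi_w^* f)$ with $w$ the word $i\cdots i$ of length $n$ (so $r_w=r_i^n$), together with $\psi_w^*\iota(u)=\iota(A_i^n u)$ (harmonicity is preserved by $\psi_i^*$) and the identity $\cE(\iota(w))=\cE^{(0)}(w,w)=-{}^t w D w$, I get
\[
r_i^{-n}\lm_{\la\iota(u)\ra}(\Sg_{i\cdots i}) = -2\,{}^t\!\bigl(r_i^{-n}A_i^n u\bigr)D\bigl(r_i^{-n}A_i^n u\bigr).
\]
Although $r_i^{-n}A_i^n u$ diverges through its $\bone$-component, this is harmless: by (D2) we have $D\bone=0$, so $D$ annihilates constants and ${}^t w D w={}^t(Pw)D(Pw)$ for all $w$. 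Replacing each factor by its $P$-image, invoking part $(1)$ and the continuity of the quadratic form $w\mapsto{}^t w D w$, the right-hand side converges to $-2\,{}^t\!\bigl((u_i,u)Pv_i\bigr)D\bigl((u_i,u)Pv_i\bigr)=-2(u_i,u)^2\,{}^t v_i D v_i$, as claimed.

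The one genuinely delicate point is exactly that, because $0<r_i<1$, the normalized iterates $r_i^{-n}A_i^n u$ do \emph{not} converge: their eigenvalue-$1$ component grows like $r_i^{-n}$. The crux of both parts is therefore to recognize that the two objects we actually evaluate — the projection $P$ in $(1)$ and the bilinear form built from $D$ in $(2)$ — both kill the constants ($P\bone=0$ and $D\bone=0$), so the divergent component is invisible and only the $r_i$-eigenspace contribution survives. The remaining inputs I used — that $\psi_i^*$ preserves harmonicity and that $\cE(\iota(w))=-{}^t w D w$ — are routine consequences of the construction of the harmonic structure, and the estimate $r_i^{-n}\|R_n u\|\to0$ is standard once the spectral gap between $r_i$ and the rest of the spectrum is in hand.
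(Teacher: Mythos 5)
Your proof is correct. There is nothing in the paper to compare it against: the author does not prove this lemma but imports it wholesale from \cite[Lemmas~6 and 7]{HN}, so you have in effect supplied the missing argument. What you wrote is the natural one, and it is consistent with how the lemma is used downstream --- for instance, the manipulation ${}^t v_\b PDPv_\b={}^t v_\b Dv_\b$ in the display \eqref{eq:eq2} rests on exactly the observation you isolate as the crux, namely that $D$ and $P$ both annihilate constants so the divergent $\bone$-component of $r_i^{-n}A_i^nu$ is invisible. The ingredients you take as given are all legitimately available from the surrounding text: the simplicity of the eigenvalues $1$ and $r_i$ and the spectral gap below $r_i$ are stated just above the lemma; $A_i\bone=\bone$ follows from $\psi_i^*\circ\iota=\iota\circ A_i$ and the fact that the harmonic extension of a constant is constant; the spectral projection onto the $r_i$-eigenspace is $u\mapsto(u_i,u)v_i$ precisely because of the normalization $(u_i,v_i)=1$; and the identities $\lm_{\la f\ra}(\Sg_w)=2r_w^{-1}\cE(\psi_w^*f)$ and $\cE(\iota(w))=-{}^twDw$ are, respectively, the definition of $\lm_{\la f\ra}$ and the standard energy-preservation property of harmonic extension (which the paper itself invokes without comment in \eqref{eq:eq2}). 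The only point worth flagging is cosmetic: you should say explicitly that $R_n$ is $A_i^n$ restricted to the spectral subspace complementary to the $1$- and $r_i$-eigenspaces, whose spectral radius is strictly less than $r_i$, which is exactly what justifies $r_i^{-n}\|R_nu\|\to0$ even in the presence of Jordan blocks; you do say this, so the argument is complete.
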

\section{Properties of measures on the shift space}
Let $I$ be a finite set $\{1,\ldots,N_0\}$ or a countable infinite set $\N$.
Take a sequence $\{e_i\}_{i\in I}$ of piecewise harmonic functions such that $2\cE(e_i)=1$ for all $i\in I$.
A real sequence $\{a_i\}_{i\in I}$ is fixed such that $a_i>0$ for every $i\in I$ and $\sum_{i\in I} a_i=1$. 
We define $\lm=\sum_{i=1}^\infty a_i \lm_{\la e_i\ra}$, which is a probability measure on $\Sg$.
For $i,j\in I$, it is easy to see that $\lm_{\la e_i,e_j\ra}$ is absolutely continuous with respect to $\lm$.
The Radon--Nikodym derivative $d\lm_{\la e_i,e_j\ra}/d\lm$ is denoted by $Z^{i,j}$.
It is evident that $\sum_{i\in I} a_i Z^{i,i}(\om)=1$ $\lm$-a.s.\ $\om$.
We may assume that this identity holds for all $\om$.
For $n\in\N$, let $\cB_n$ be a $\sg$-field on $\Sg$ generated by $\{\Sg_w\mid w\in W_n\}$.
We define a function $Z_n^{i,j}$ on $\Sg$ as $Z_n^{i,j}(\om)=\lm_{\la e_i,e_j\ra}(\Sg_{[\om]_n})/\lm(\Sg_{[\om]_n})$.
Then, $Z_n^{i,j}$ is the conditional expectation of $Z^{i,j}$ given $\cB_n$ with respect to $\lm$.
According to the martingale convergence theorem, $\lm(\Sg')=1$, where 
\[
\Sg'=\{\om\in\Sg\mid \lim_{n\to\infty}Z_n^{i,j}(\om)=Z^{i,j}(\om)\text{ for all $i$, $j\in I$}\}.
\]
We define 
\[
\cK=\left\{f\in\cH\left|\, \int_K f\,d\mu=0,\ 2\cE(f)=1\right\}\right..
\]
$\cK$ is a compact set in $\cF$.
For $f\in\cH$, we set
\[
\gm(f)=\max\{|(u_i,f|_{V_0})|;\,i=1,\ldots,d\}.
\]
Here, $(\cdot,\cdot)$ denotes the inner product on $\ell(V_0)$.
When $f$ is not constant,
\[
D(f|_{V_0})=\left(\begin{array}{c}(u_1,f|_{V_0})\\ \vdots \\ (u_d,f|_{V_0})\end{array}\right)
\]
is not a zero vector, therefore $\gm(f)>0$.
Due to the compactness of $\cK$ and the continuity of $\gm$, $\dl:=\min_{f\in\cK}\gm(f)$ is greater than 0.
For $f\in\cH$, we set 
\[
\h(f)=\min\{i=1,\ldots,d; |(u_i,f|_{V_0})|=\gm(f)\}.
\]
The map $\cH\ni f\mapsto \h(f)\in\{1,\ldots,d\}$ is Borel measurable.
\begin{lem}\label{lem:conditional}
For $k\in\N$, there exists $c_k\in(0,1]$ such that for any $n\ge m\ge1$ and $e\in \cH_m$, $w\in W_n$,
\begin{align}\label{eq:conditional}
\lm_{\la e\ra}(\{\om_1\om_2\cdots\in\Sg_w\mid \om_{n+j}=\h(\psi_w^* e)\text{ for all }j=1,\ldots,k\})
\ge c_k\lm_{\la e\ra}(\Sg_w).
\end{align}
\end{lem}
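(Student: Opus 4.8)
The plan is to use the self-similarity \eqref{eq:consistency} to strip off the prefix $w$, and then run a compactness argument on $\cK$. Fix $n\ge m\ge1$, $e\in\cH_m$, $w\in W_n$, put $i=\h(\psi_w^*e)$ and $v=\underbrace{i\cdots i}_{k}\in W_k$. The event in \eqref{eq:conditional} is precisely the cylinder $\Sg_{wv}$, and applying \eqref{eq:consistency} with $w'=v$ and with $w'=\emptyset$ shows that both $\lm_{\la e\ra}(\Sg_{wv})$ and $\lm_{\la e\ra}(\Sg_w)$ equal $r_w^{-1}$ times the corresponding quantity for $h:=\psi_w^*e$; hence \eqref{eq:conditional} is equivalent to
\[
\lm_{\la h\ra}(\Sg_{v})\ge c_k\,\lm_{\la h\ra}(\Sg).
\]
Since $e\in\cH_m$ and $n\ge m$, splitting $w=w'w''$ with $w'\in W_m$ gives $h=\psi_{w''}^*(\psi_{w'}^*e)\in\cH$, with $\h(h)=i$. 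If $h$ is constant both sides vanish, so assume otherwise. The measures $\lm_{\la\cdot\ra}$ are unchanged by adding a constant to the function and scale quadratically in it, while $(u_i,\bone)=0$ makes $\gm$ and $\h$ invariant under these two operations; replacing $h$ by $(h-\int_K h\,d\mu)/\sqrt{2\cE(h)}$ we may thus assume $h\in\cK$, so that $\lm_{\la h\ra}(\Sg)=1$. It therefore suffices to find $c_k\in(0,1]$ with $\lm_{\la h\ra}(\Sg_{v})\ge c_k$ for every $h\in\cK$, where $i=\h(h)$ and $v=\underbrace{i\cdots i}_{k}$; the uniformity over all $n,m,e,w$ is then automatic.

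The difficulty is that $\h$ is only Borel measurable, so the map $h\mapsto\lm_{\la h\ra}(\Sg_v)$, in which $v$ is determined by $\h(h)$, is generally discontinuous. I would remove this by fixing the index. For a fixed $i\in\{1,\dots,d\}$ and $v=\underbrace{i\cdots i}_{k}$, iterating the identity $\psi_i^*\circ\iota=\iota\circ A_i$ gives $\psi_v^*h=\iota(A_i^{k}(h|_{V_0}))$, and \eqref{eq:consistency} yields
\[
\lm_{\la h\ra}(\Sg_{v})=r_i^{-k}\,\lm_{\la\psi_v^*h\ra}(\Sg)=2\,r_i^{-k}\,\cE\bigl(\iota(A_i^{k}(h|_{V_0}))\bigr),
\]
which is a continuous function of $h\in\cH\cong l(V_0)$. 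Let $\cK_i=\{h\in\cK\mid|(u_i,h|_{V_0})|=\gm(h)\}$; this is closed in the compact set $\cK$, hence compact, and it contains every $h\in\cK$ with $\h(h)=i$.

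The crux is positivity on $\cK_i$. For $h\in\cK_i$ we have $|(u_i,h|_{V_0})|=\gm(h)\ge\dl>0$. Using $\,^t\!A_i\,u_i=r_iu_i$ gives $(u_i,A_i^{k}(h|_{V_0}))=r_i^{k}(u_i,h|_{V_0})\ne0$; since $(u_i,\bone)=0$, the vector $A_i^{k}(h|_{V_0})$ cannot be constant, so $\cE(\iota(A_i^{k}(h|_{V_0})))>0$ and $\lm_{\la h\ra}(\Sg_v)>0$. By continuity and compactness, $c_k^{(i)}:=\min_{h\in\cK_i}\lm_{\la h\ra}(\Sg_v)$ is attained and positive (for each $i$ with $\cK_i\ne\emptyset$), and $c_k^{(i)}\le\lm_{\la h\ra}(\Sg)=1$. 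Taking $c_k=\min_i c_k^{(i)}\in(0,1]$ and noting that any $h\in\cK$ with $\h(h)=i$ belongs to $\cK_i$ completes the argument. The main obstacle, and the only place the definition of $\h$ is genuinely needed, is this non-degeneracy: selecting the dominant index $i=\h(h)$ keeps $(u_i,h|_{V_0})$ bounded away from zero and thereby prevents $h$ from degenerating to a harmonic-constant function along the cell $\underbrace{i\cdots i}_{k}$ — the same eigenvector mechanism that underlies Lemma~\ref{lem:useful}.
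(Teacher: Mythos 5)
Your proof is correct and follows essentially the same route as the paper's: reduce via the consistency relation \eqref{eq:consistency} to normalized harmonic functions in $\cK$, then obtain a uniform lower bound from the continuity of $f\mapsto\lm_{\la f\ra}(\Sg_{i\cdots i})$ on a compact subset $\cK_i\subset\cK$ on which $|(u_i,f|_{V_0})|\ge\dl$. The only divergence is the pointwise positivity step: the paper invokes Lemma~\ref{lem:useful}\,(2) together with the monotonicity $\lm_{\la f\ra}(\Sg_{i\cdots i_k})\ge\lm_{\la f\ra}(\Sg_{i\cdots i_n})$, whereas you derive the exact identity $\lm_{\la h\ra}(\Sg_{i\cdots i_k})=2r_i^{-k}\cE(\iota(A_i^{k}(h|_{V_0})))$ and use ${}^t\!A_iu_i=r_iu_i$ and $(u_i,\bone)=0$ directly --- a slightly more self-contained variant of the same eigenvector mechanism.
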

\begin{rem}
When $\lm_{\la e\ra}$ is a probability measure, \eqref{eq:conditional} for all $w\in W_n$ is equivalent to
\[
 \lm_{\la e\ra}[\{\om_1\om_2\cdots\in\Sg\mid \om_{n+j}=\h(\psi_w^* e)\text{ for all }j=1,\ldots,k\}\mid \cB_n]\ge c_k\quad
 \lm_{\la e\ra}\text{-a.s.},
\]
where $\lm_{\la e\ra}[\,\cdot\mid \cB_n]$ denotes the conditional probability of $\lm_{\la e\ra}$ given $\cB_n$.
\end{rem}
\begin{proof}
Equation \eqref{eq:conditional} is equivalent to
\[
\lm_{\la \psi_w^*e\ra}(\{\om_1\om_2\cdots\in\Sg\mid \om_{j}=\h(\psi_w^* e)\text{ for all }j=1,\ldots,k\})\ge c_k\lm_{\la \psi_w^*e\ra}(\Sg).
\]
Therefore, it is sufficient to prove that for $f\in\cK$,
\[
\lm_{\la f\ra}(\{\om_1\om_2\cdots\in\Sg\mid \om_{j}=\h(f)\text{ for all }j=1,\ldots,k\})\ge c_k.
\]
Let $i\in\{1,\ldots,d\}$.
From Lemma~\ref{lem:useful} (2), for any $f\in\cH$,
\[
\lim_{n\to\infty}r_i^{-n}\lm_{\la f\ra}(\Sg_{\underbrace{\scriptstyle i\cdots i}_n})=-2(u_i,f|_{V_0})^2\,^t v_i Dv_i.
\]
Therefore, if $(u_i,f|_{V_0})\ne0$ for $f\in\cH$, we obtain $\lm_{\la f\ra}(\Sg_{\underbrace{\scriptstyle i\cdots i}_n})>0$ for sufficiently large $n\,(\!\!{}\ge k)$.
In particular, $\lm_{\la f\ra}(\Sg_{\underbrace{\scriptstyle i\cdots i}_k})\ge\lm_{\la f\ra}(\Sg_{\underbrace{\scriptstyle i\cdots i}_n})>0$.
Let $\cK_i=\{f\in\cK\mid |(u_i,f|_{V_0})|\ge\dl\}$.
Since $\cK_i$ is compact and the map $\cK_i\ni f\mapsto \lm_{\la f\ra}(\Sg_{\underbrace{\scriptstyle i\cdots i}_k})\in\R$ is continuous, $\eps_i:=\min_{f\in\cK_i}\lm_{\la f\ra}(\Sg_{\underbrace{\scriptstyle i\cdots i}_k})$ is strictly positive.
We define $c_k=\min\{\eps_i\mid i=1,\ldots,d\}$.
For $f\in\cK$, let $i=\h(f)$. Since $f\in\cK_i$, we have $\lm_{\la f\ra}(\Sg_{\underbrace{\scriptstyle i\cdots i}_k})\ge \eps_i\ge c_k$. 
This completes the proof.
\end{proof}
We fix $k\in\N$, $n\ge m\ge1$, and $e\in\cH_m$.
For $j\ge n$, let 
\[
\Om_j=\left\{\om\in\Sg\left|\, \sg^{jk}(\om)\not\in\Sg_{\underbrace{\scriptstyle i\cdots i}_k}\text{ where }i=\h\left(\psi^*_{[\om]_{jk}}e\right)\right\}\right. .
\]
Let $M>n$. Since $\bigcap_{j=n}^{M-1}\Om_j$ is $\cB_{Mk}$-measurable, an application of Lemma~\ref{lem:conditional} yields
$\lm_{\la e\ra}(\bigcap_{j=n}^{M}\Om_j)\le (1-c_k)\lm_{\la e\ra}(\bigcap_{j=n}^{M-1}\Om_j)$.
By repeating this procedure, we obtain $\lm_{\la e\ra}(\bigcap_{j=n}^{M}\Om_j)\le(1-c_k)^{M-n}\lm_{\la e\ra}(\Om_n)$.
Letting $M\to\infty$, we have $\lm_{\la e\ra}(\bigcap_{j= n}^\infty \Om_j)=0$.
Therefore, when we set 
\[
\Xi(e)=\left\{\om\in\Sg\left| \begin{array}{l}\text{for all $k\in\N$, infinitely often $j$,}\\\sg^{jk}(\om)\in\Sg_{\underbrace{\scriptstyle i\cdots i}_k}\text{ where }i=\h\left(\psi^*_{[\om]_{jk}}e\right)\end{array}\right\}\right.
\]
for $e\in \cH_*$, then $\lm_{\la e\ra}(\Sg\setminus\Xi(e))=0$.

For $\om\in\Sg$, we set 
\[
M(\om)=\min\{i\in I\mid a_i Z^{i,i}(\om)=\max_{j\in I}a_j Z^{j,j}(\om)\}.
\]
Clearly, $Z^{M(\om),M(\om)}(\om)>0$ and the map $\Sg\ni\om\mapsto M(\om)\in I$ is measurable.

For $\a\in I$, we set 
\[
\Sg(\a)=\{\om\in\Sg'\mid M(\om)=\a\}\cap \Xi(e_\a).
\]
Then, $\lm_{\la e_\a\ra}(\{M(\om)=\a\}\setminus \Sg(\a))=0$. 
Since $\lm_{\la e_\a\ra}(d\om)=Z^{\a,\a}(\om)\,\lm(d\om)$ and $Z^{\a,\a}(\om)>0$ on $\{M(\om)=\a\}$, $\lm_{\la e_\a\ra}$ is equivalent to $\lm$ on $\{M(\om)=\a\}$.
Thus, $\lm(\{M(\om)=\a\}\setminus \Sg(\a))=0$. 
Therefore, we obtain the following.
\begin{lem}\label{lem:full}
$\lm(\Sg\setminus\bigcup_{\a\in I} \Sg(\a))=0$.
\end{lem}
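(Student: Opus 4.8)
The plan is to use that the sets $\{M(\om)=\a\}$, $\a\in I$, form a measurable partition of $\Sg$ on which the conclusion has, piece by piece, essentially already been prepared in the discussion preceding the statement. First I would record that $M\colon\Sg\to I$ is defined at every $\om$: the terms $a_jZ^{j,j}(\om)$ are nonnegative and sum to $1$, hence form a summable family, so $a_jZ^{j,j}(\om)\to0$ along $I$ and the supremum $\max_{j\in I}a_jZ^{j,j}(\om)$ is attained; thus the minimizing index exists and $M$ is measurable. Consequently $\Sg=\bigcup_{\a\in I}\{M=\a\}$ is a disjoint measurable decomposition, and since $\Sg(\a)\subseteq\{M(\om)=\a\}$ by construction we obtain the identity
\[
\Sg\setminus\bigcup_{\a\in I}\Sg(\a)=\bigcup_{\a\in I}\bigl(\{M=\a\}\setminus\Sg(\a)\bigr),
\]
the union on the right being over pairwise disjoint sets. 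It therefore suffices to show $\lm(\{M=\a\}\setminus\Sg(\a))=0$ for each fixed $\a$ and then sum over $\a\in I$ by countable subadditivity.

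For a fixed $\a$, the definition $\Sg(\a)=\{\om\in\Sg'\mid M(\om)=\a\}\cap\Xi(e_\a)$ gives at once the inclusion $\{M=\a\}\setminus\Sg(\a)\subseteq(\Sg\setminus\Sg')\cup(\Sg\setminus\Xi(e_\a))$. I would first bound the right-hand side under the auxiliary measure $\lm_{\la e_\a\ra}$ rather than under $\lm$. Since $\lm_{\la e_\a\ra}=Z^{\a,\a}\lm$ is absolutely continuous with respect to $\lm$ and $\lm(\Sg\setminus\Sg')=0$ by the martingale convergence theorem, we get $\lm_{\la e_\a\ra}(\Sg\setminus\Sg')=0$; and $\lm_{\la e_\a\ra}(\Sg\setminus\Xi(e_\a))=0$ has already been established for every $e\in\cH_*$. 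Hence $\lm_{\la e_\a\ra}(\{M=\a\}\setminus\Sg(\a))=0$.

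The crux is the passage back from a $\lm_{\la e_\a\ra}$-null set to a $\lm$-null set, and this is precisely where the choice of $M$ is exploited: on $\{M=\a\}$ the density satisfies $Z^{\a,\a}>0$, so $\lm$ and $\lm_{\la e_\a\ra}$, restricted to $\{M=\a\}$, are mutually absolutely continuous. Therefore the vanishing of $\lm_{\la e_\a\ra}(\{M=\a\}\setminus\Sg(\a))$ forces $\lm(\{M=\a\}\setminus\Sg(\a))=0$. Substituting into the displayed identity and summing over $\a\in I$ yields $\lm(\Sg\setminus\bigcup_{\a\in I}\Sg(\a))=0$.

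I expect the only delicate points to be the well-definedness of $M$ on all of $\Sg$ in the countably infinite case $I=\N$ (so that the sets $\{M=\a\}$ genuinely partition $\Sg$), and the change-of-measure step, where the positivity $Z^{\a,\a}>0$ on $\{M=\a\}$ --- guaranteed exactly because $\a$ is a maximizer of $a_jZ^{j,j}$ --- is what converts the $\lm_{\la e_\a\ra}$-almost-sure control of $\Sg'$ and $\Xi(e_\a)$ into $\lm$-almost-sure control. The remaining steps are routine applications of absolute continuity and countable subadditivity.
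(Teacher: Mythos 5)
Your proof is correct and follows essentially the same route as the paper: the paper's one-line proof is exactly the decomposition $\Sg\setminus\bigcup_{\a\in I}\Sg(\a)=\bigcup_{\a\in I}(\{M=\a\}\setminus\Sg(\a))$, with the per-$\a$ null-set claim (via $\lm_{\la e_\a\ra}$-nullity of $\Sg\setminus\Sg'$ and $\Sg\setminus\Xi(e_\a)$, then equivalence of $\lm$ and $\lm_{\la e_\a\ra}$ on $\{M=\a\}$ since $Z^{\a,\a}>0$ there) established in the preceding paragraph. Your extra remark on why the maximum defining $M(\om)$ is attained when $I=\N$ is a welcome detail the paper leaves implicit.
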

\begin{proof}
It is sufficient to notice that 
$
\Sg\setminus\bigcup_{\a\in I} \Sg(\a)= \bigcup_{\a\in I}(\{M(\om)=\a\}\setminus \Sg(\a))$. 
\end{proof}
We fix $\a\in I$ and $\om\in \Sg(\a)$. 
It is noteworthy that $\lm_{\la e_\a \ra}(\Sg_{[\om]_n})>0$ for all $n$.
Indeed, if $\lm_{\la e_\a \ra}(\Sg_{[\om]_n})=0$ for some $n$, then $\lm_{\la e_\a \ra}(\Sg_{[\om]_m})=0$ for all $m\ge n$, which implies that $Z^{\a,\a}(\om)=0$, thereby resulting in a contradiction.
In particular, $\psi^*_{[\om]_n}e_\a$ is not constant for an arbitrary $n$.

Take an increasing sequence $\{n(k)\}\uparrow\infty$ of natural numbers such that $e_\a\in\cH_{n(1)}$, and for every $k$,
\[
  \sg^{n(k)}\om\in\Sg_{\underbrace{\scriptstyle\h(\psi^*_{[\om]_{n(k)}}e_\a)\cdots\h(\psi^*_{[\om]_{n(k)}}e_\a)}_k}.
\]
By noting that $\h(\psi^*_{[\om]_{n(k)}}e_\a)$ belongs to $\{1,\ldots,d\}$, there exists $\b\in\{1,\ldots,d\}$ such that $\{k\in\N\mid \h(\psi^*_{[\om]_{n(k)}}e_\a)=\b\}$ is an infinite set.
Take a subsequence $\{n(k')\}$ of $\{n(k)\}$ such that $\h(\psi^*_{[\om]_{n(k')}}e_\a)=\b$ for all $k'$.
For $f\in\cF$, we set 
\[
\xi(f)=\left\{\begin{array}{cl}
\left(f-\int_K f\,d\mu\right)\left/\sqrt{2\cE(f)}\right.& \text{if $f$ is not constant}\\
0 & \text{if $f$ is constant.}
\end{array}\right. 
\]
For $i\in I$, if $k'$ is sufficiently large so that $e_i\in\cH_{n(k')}$, then $\xi(\psi^*_{[\om]_{n(k')}}e_i)\in\cK\cup\{0\}$.
By using the diagonal argument if necessary, we can take a subsequence $\{n(k(l))\}$ of $\{n(k')\}$ such that $\xi(\psi^*_{[\om]_{n(k(l))}}e_i)$ converges in $\cF$ as $l\to\infty$ for every $i\in I$.
For notational conveniences, we denote $\xi(\psi^*_{[\om]_{n(k(l))}}e_i)$ by $f^i_l$ and its limit by $f^i$, which belongs to $\cK\cup\{0\}$.
Since $f^\a_l\in\cK$ for every $l$, we have $|(u_\b,f^\a_l|_{V_0})|\ge\dl$ for every $l$, hence $|(u_\b,f^\a|_{V_0})|\ge\dl$.

From Lemma~\ref{lem:useful} (1),
\[
\lim_{k\to\infty}r_\b^{-k}P A_\b^{k}u
= (u_\b,u)Pv_\b
\]
for any $u\in l(V_0)$.
In particular, the operator norms of $r_\b^{-k}PA_\b^k$ are bounded in $k$. Therefore, since $f_l^i|_{V_0}\to f^i|_{V_0}$ as $l\to\infty$, we obtain
\begin{align}
\label{eq:eq1}
\lim_{l\to\infty}r_\b^{-k(l)}P A_\b^{k(l)}f^i_l|_{V_0}
= (u_\b,f^i|_{V_0})Pv_\b,
\end{align}
which implies that
\begin{align}
\label{eq:eq2}
\lim_{l\to\infty}r_\b^{-2k(l)}\cE( (\psi_\b^*)^{k(l)}f^i_l)
&=\lim_{l\to\infty}-r_\b^{-2k(l)}\,{}^t\!(P A_\b^{k(l)} f_l^i|_{V_0})D(P A_\b^{k(l)} f_l^i|_{V_0})\\
&= -(u_\b,f^i|_{V_0})^2\,^t v_\b PDPv_\b\nonumber\\
&= -(u_\b,f^i|_{V_0})^2\,^t v_\b Dv_\b.\nonumber
\end{align}
It should be noted that the right-hand side of \eqref{eq:eq2} does not vanish when $i=\a$, since $|(u_\b,f^\a|_{V_0})|\ge\dl$.

For $i\in I$ and $n\in\N$, define
$y^i_n=\lm_{\la e_i\ra}(\Sg_{[\om]_n})/\lm_{\la e_\a\ra}(\Sg_{[\om]_n})$.
Since 
\[
y^i_n=\frac{\lm_{\la e_i\ra}(\Sg_{[\om]_n})}{\lm(\Sg_{[\om]_n})}\left/\frac{\lm_{\la e_\a\ra}(\Sg_{[\om]_n})}{\lm(\Sg_{[\om]_n})}\right.,
\]
$y^i_n$ converges to $Z^{i,i}(\om)/Z^{\a,\a}(\om)\in[0,\infty)$ as $n$ tends to $\infty$.
We denote $y^i=Z^{i,i}(\om)/Z^{\a,\a}(\om)$.
Clearly, $y^\a=1$.

Suppose $y^i=0$. Then, for any $j\in I$,
\begin{align*}
\left|\frac{Z^{i,j}(\om)}{Z^{\a,\a}(\om)}\right|
&=\lim_{n\to\infty}\left|\frac{\lm_{\la e_i,e_j\ra}(\Sg_{[\om]_n})}{\lm_{\la e_\a\ra}(\Sg_{[\om]_n})}\right|\\
&\le\limsup_{n\to\infty}\left(\frac{\lm_{\la e_i\ra}(\Sg_{[\om]_n})}{\lm_{\la e_\a\ra}(\Sg_{[\om]_n})}\right)^{1/2}\left(\frac{\lm_{\la e_j\ra}(\Sg_{[\om]_n})}{\lm_{\la e_\a\ra}(\Sg_{[\om]_n})}\right)^{1/2}\\
&=\sqrt{y^i y^j}
=0.
\end{align*}
Thus, $Z^{i,j}(\om)=0$.
We set $\tau_i=1$ for later use.

Next, suppose $y^i>0$. 
Note that $\lm_{\la e_i\ra}(\Sg_{[\om]_n})>0$ for any $n$. 
(Indeed, if $\lm_{\la e_i\ra}(\Sg_{[\om]_n})=0$ for some $n$, then $Z^{i,i}(\om)=0$, which implies that $y^i=0$.)
In particular, $\psi_{[\om]_n}^* e_i$ is not a constant function for an arbitrary $n$.
Take a sufficiently large $l_0$ such that $e_i\in\cH_{n(k(l_0))}$ and $y^i_n>0$ for all $n\ge n(k(l_0))$.
For $m\ge l_0$, we define 
\[
x^i_m=y^i_{n(k(m))+k(m)}\left/y^i_{n(k(m))}\right..
\]
Since $\log y^i_n$ converges as $n\to\infty$, $\log x^i_m$ converges to $0$ as $m\to\infty$.
In other words, $\lim_{m\to\infty}x^i_m=1$.
On the other hand, we have
\begin{align*}
x^i_m
&=\left.\frac{\lm_{\la e_i\ra}(\Sg_{[\om]_{n(k(m))+k(m)}})}{\lm_{\la e_\a\ra}(\Sg_{[\om]_{n(k(m))+k(m)}})}\right/\frac{\lm_{\la e_i\ra}(\Sg_{[\om]_{n(k(m))}})}{\lm_{\la e_\a\ra}(\Sg_{[\om]_{n(k(m))}})}\\
&=\left.\frac{\lm_{\la e_i\ra}(\Sg_{[\om]_{n(k(m))+k(m)}})}{\lm_{\la e_i\ra}(\Sg_{[\om]_{n(k(m))}})}\right/\frac{\lm_{\la e_\a\ra}(\Sg_{[\om]_{n(k(m)+k(m))}})}{\lm_{\la e_\a\ra}(\Sg_{[\om]_{n(k(m))}})}\\
&=\left.\frac{r_{[\om]_{n(k(m))}}^{-1}\lm_{\la \psi^*_{[\om]_{n(k(m))}}e_i\ra}(\Sg_{\underbrace{\scriptstyle\b\cdots\b}_{k(m)}})}{r_{[\om]_{n(k(m))}}^{-1}\lm_{\la \psi^*_{[\om]_{n(k(m))}}e_i\ra}(\Sg)}\right/
\frac{r_{[\om]_{n(k(m))}}^{-1}\lm_{\la \psi^*_{[\om]_{n(k(m))}}e_\a\ra}(\Sg_{\underbrace{\scriptstyle\b\cdots\b}_{k(m)}})}{r_{[\om]_{n(k(m))}}^{-1}\lm_{\la \psi^*_{[\om]_{n(k(m))}}e_\a\ra}(\Sg)}\\*
&~\hspace{24em}\text{(from \eqref{eq:consistency})}\\
&=\left.\frac{2r_\b^{-k(m)}\cE( (\psi^*_\b)^{k(m)}\psi^*_{[\om]_{n(k(m))}}e_i)}{2\cE(\psi^*_{[\om]_{n(k(m))}}e_i)}\right/\frac{2r_\b^{-k(m)}\cE( (\psi^*_\b)^{k(m)}\psi^*_{[\om]_{n(k(m))}}e_\a)}{2\cE(\psi^*_{[\om]_{n(k(m))}}e_\a)}\\
&=\frac{\cE((\psi_\b^*)^{k(m)}f^i_m)}{\cE((\psi_\b^*)^{k(m)}f^\a_m)}\\
&\overset{m\to\infty}\longrightarrow 
\frac{-(u_\b,f^i|_{V_0})^2\,^tv_\b Dv_\b}{-(u_\b,f^\a|_{V_0})^2\,^tv_\b Dv_\b}
\qquad \text{(from \eqref{eq:eq2})}\\
&=\frac{(u_\b,f^i|_{V_0})^2}{(u_\b,f^\a|_{V_0})^2}.
\end{align*}
Therefore, $(u_\b,f^i|_{V_0})=\tau_i(u_\b,f^\a|_{V_0})$ where $\tau_i=1$ or $-1$.
Then, from \eqref{eq:eq1},
\begin{equation}\label{eq:eq3}
  \lim_{l\to\infty}r_\b^{-k(l)}P A_\b^{k(l)} f_l^i|_{V_0}
  = \tau_i(u_\b,f^\a|_{V_0})Pv_\b.
\end{equation}

Now, let $i,j\in I$ and suppose $y^i>0$ and $y^j>0$. 
Then,
\[
\lim_{m\to\infty}\frac{\lm_{\la e_i,e_j\ra}(\Sg_{[\om]_{n(k(m))+k(m)}})}{\lm_{\la e_\a\ra}(\Sg_{[\om]_{n(k(m))+k(m)}})}=
\frac{Z^{i,j}(\om)}{Z^{\a,\a}(\om)}.
\]
On the other hand, for sufficiently large $m$, we have
\begin{align*}
\lefteqn{\frac{\lm_{\la e_i,e_j\ra}(\Sg_{[\om]_{n(k(m))+k(m)}})}{\lm_{\la e_\a\ra}(\Sg_{[\om]_{n(k(m))+k(m)}})}}\\
&=\frac{r_{[\om]_{n(k(m))}}^{-1}\lm_{\la \psi^*_{[\om]_{n(k(m))}}e_i,\psi^*_{[\om]_{n(k(m))}}e_j \ra}(\Sg_{\underbrace{\scriptstyle\b\cdots\b}_{k(m)}})}%
{r_{[\om]_{n(k(m))}}^{-1}\lm_{\la \psi^*_{[\om]_{n(k(m))}}e_\a \ra}(\Sg_{\underbrace{\scriptstyle\b\cdots\b}_{k(m)}})}
\qquad \text{(from \eqref{eq:consistency})}\\
&=\frac{\sqrt{2\cE(\psi^*_{[\om]_{n(k(m))}}e_i)}\sqrt{2\cE(\psi^*_{[\om]_{n(k(m))}}e_j)}\lm_{\la f^i_m,f^j_m \ra}(\Sg_{\underbrace{\scriptstyle\b\cdots\b}_{k(m)}})}%
{2\cE(\psi^*_{[\om]_{n(k(m))}}e_\a)\lm_{\left\la f^\a_m \right\ra}(\Sg_{\underbrace{\scriptstyle\b\cdots\b}_{k(m)}})}\\
&=\frac{\sqrt{\lm_{\la e_i\ra}(\Sg_{[\om]_{n(k(m))}})\lm_{\la e_j\ra}(\Sg_{[\om]_{n(k(m))}})}}{\lm_{\la e_\a\ra}(\Sg_{[\om]_{n(k(m))}})}
\cdot\frac{2r_\b^{-k(m)}\cE((\psi^*_\b)^{k(m)}f^i_m,(\psi^*_\b)^{k(m)}f^j_m)}%
{2r_\b^{-k(m)}\cE((\psi^*_\b)^{k(m)}f^\a_m)}\\
&=\sqrt{y^i_{n(k(m))}y^j_{n(k(m))}}\cdot
\frac{-\strut^t\!\left(r_\b^{-k(m)}P A_\b^{k(m)}f^i_m|_{V_0}\right)D\left(r_\b^{-k(m)}P A_\b^{k(m)}f^j_m|_{V_0}\right)}{-\strut^t\!\left(r_\b^{-k(m)}P A_\b^{k(m)}f^\a_m|_{V_0}\right)D\left(r_\b^{-k(m)}P A_\b^{k(m)}f^\a_m|_{V_0}\right)}\\
&\overset{m\to\infty}\longrightarrow 
\sqrt{y^i y^j}\cdot\frac{\tau_i (u_\b,f^\a|_{V_0})\cdot\tau_j (u_\b,f^\a|_{V_0})\,^tv_\b D v_\b}{(u_\b,f^\a|_{V_0})^2\,^tv_\b D v_\b}
\qquad\text{(from \eqref{eq:eq3})}\\
&=\sqrt{y^i y^j}\tau_i\tau_j.
\end{align*}
Therefore, $\sqrt{y^i y^j}\tau_i\tau_j=Z^{i,j}(\om)/Z^{\a,\a}(\om)$.
This relation is valid even when $y^i=0$ or $y^j=0$.

For $i\in I$, we define 
\begin{align}\label{eq:zt}
\zt_i=\frac{Z^{i,\a}(\om)}{\sqrt{Z^{\a,\a}(\om)}}.
\end{align}
 Then,
\begin{align*}
\zt_i\zt_j
&=\frac{Z^{i,\a}(\om)}{Z^{\a,\a}(\om)}\cdot\frac{Z^{j,\a}(\om)}{Z^{\a,\a}(\om)}\cdot Z^{\a,\a}(\om)\\
&=\sqrt{y^i y^\a}\tau_i\tau_\a\cdot\sqrt{y^j y^\a}\tau_j\tau_\a\cdot Z^{\a,\a}(\om)\\
&=\sqrt{y^i y^j}\tau_i\tau_j\cdot Z^{\a,\a}(\om)\\
&=Z^{i,j}(\om).
\end{align*}
Along with Lemma~\ref{lem:full}, we have proved the following key proposition.
\begin{prop}\label{prop:key}
There exist measurable functions $\{\zt_i\}_{i\in I}$ on $\Sg$ such that for every $i,j\in I$, $Z^{i,j}(\om)=\zt_i(\om)\zt_j(\om)$ $\lm$-a.s.\ $\om$.
\end{prop}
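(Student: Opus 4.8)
The plan is to read $(Z^{i,j}(\om))_{i,j\in I}$ as a Gram-type matrix recording the mutual energy densities of the fixed functions $e_i$ relative to $\lm$, and to show it has rank at most one for $\lm$-almost every $\om$; the factorization $Z^{i,j}=\zt_i\zt_j$ is precisely this rank-one statement. The geometric mechanism I would exploit is that, as one zooms into ever smaller cells along a generic infinite word $\om$, each piecewise harmonic $e_i$ should—after normalization—collapse onto a single one-dimensional direction fixed by the local geometry. Once all the $e_i$ are asymptotically scalar multiples of one function, their pairwise energy correlations automatically factor, which is exactly what the proposition asserts.

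First I would reduce to a single dominant index. Using $M(\om)=\min\{i\mid a_iZ^{i,i}(\om)=\max_j a_jZ^{j,j}(\om)\}$ I fix $\a=M(\om)$; on $\{M=\a\}$ one has $Z^{\a,\a}(\om)>0$, so $\lm_{\la e_\a\ra}$ is locally equivalent to $\lm$ and serves as a reference. By Lemma~\ref{lem:full} it suffices to work on the full-measure set $\bigcup_\a\Sg(\a)$. The point of the sets $\Sg(\a)$ is that, through the Borel--Cantelli-type estimate issuing from Lemma~\ref{lem:conditional}, they force $\om$ to enter infinitely often a diagonal cell $\Sg_{i\cdots i}$ in the dominant-eigenvalue direction $i=\h(\psi^*_{[\om]_{jk}}e_\a)$.

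Next I would extract the limiting direction. Along a subsequence $n(k)$ hitting these diagonal cells, a pigeonhole argument fixes one index $\b\in\{1,\ldots,d\}$ with $\h(\psi^*_{[\om]_{n(k')}}e_\a)=\b$ infinitely often, and a diagonal argument yields a further subsequence along which the normalized pullbacks $f^i_l=\xi(\psi^*_{[\om]_{n(k(l))}}e_i)$ converge in $\cF$ to limits $f^i\in\cK\cup\{0\}$. The crucial spectral input is Lemma~\ref{lem:useful}(1): rescaling $A_\b^k$ by $r_\b^{-k}$ projects any vector onto the line through $Pv_\b$ with coefficient $(u_\b,\cdot)$. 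Combined with the representation of a harmonic function's energy through $D$, this produces \eqref{eq:eq2}, so the rescaled energies tend to $-(u_\b,f^i|_{V_0})^2\,^tv_\b Dv_\b$. A ratio comparison of the weights $y^i_n=\lm_{\la e_i\ra}(\Sg_{[\om]_n})/\lm_{\la e_\a\ra}(\Sg_{[\om]_n})$ at consecutive scales—using that $\log y^i_n$ converges, so the one-step ratio $x^i_m\to1$—then forces $(u_\b,f^i|_{V_0})=\tau_i(u_\b,f^\a|_{V_0})$ with a sign $\tau_i\in\{+1,-1\}$. Establishing this collapse onto a single direction, and in particular recovering the relative sign $\tau_i$ and not merely its magnitude, is the heart of the argument and the main obstacle.

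Finally the factorization falls out by direct computation. Feeding \eqref{eq:eq3} into the cross-measure ratio along the diagonal cells gives $Z^{i,j}(\om)/Z^{\a,\a}(\om)=\sqrt{y^iy^j}\,\tau_i\tau_j$, an identity that persists when $y^i$ or $y^j$ vanishes thanks to the Cauchy--Schwarz bound already noted (with the convention $\tau_i=1$ in that case). Setting $\zt_i(\om)=Z^{i,\a}(\om)/\sqrt{Z^{\a,\a}(\om)}$ as in \eqref{eq:zt} and expanding $\zt_i\zt_j$ via this relation recovers exactly $Z^{i,j}(\om)$. Since the construction of $\a$, $\b$, and the signs $\tau_i$ is measurable in $\om$ and valid on the full-measure set $\bigcup_\a\Sg(\a)$, the resulting $\zt_i$ are measurable and satisfy the claimed identity $\lm$-almost everywhere.
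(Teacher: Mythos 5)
Your proposal is correct and follows essentially the same route as the paper: reduction to the dominant index $\a=M(\om)$ via Lemma~\ref{lem:full}, extraction of the direction $\b$ along diagonal cells guaranteed by Lemma~\ref{lem:conditional}, the spectral collapse from Lemma~\ref{lem:useful} giving $(u_\b,f^i|_{V_0})=\tau_i(u_\b,f^\a|_{V_0})$, and the final factorization through $\zt_i=Z^{i,\a}/\sqrt{Z^{\a,\a}}$, which (as you note) sidesteps any need to choose the signs $\tau_i$ measurably since only the products $\tau_i\tau_j$ enter.
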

\begin{rem}
According to this proposition, when $I$ is a finite set, the matrix $(Z^{i,j}(\om))_{i,j\in I}$ has a rank one $\lm$-a.s.\ $\om$.
In particular, the proposition implies that the matrix $Z(\om)$ defined in \cite{Ku1,Ku2} has rank one a.s.~$\om$.
\end{rem}
\section{AF-martingale dimension}
We use the same notations as those used in Sections 2 and 3.
We take $I=\N$ and a sequence $\{e_i\}_{i\in I}$ of piecewise harmonic functions so that the linear span of $\{e_i\}_{i\in I}$ is dense in $\cF$.
Let $\nu$ denote the induced measure of $\lm$ by $\pi\colon\Sg\to K$.
From Lemma~\ref{lem:measure}, $(K,\nu)$ and $(\Sg,\lm)$ are isomorphic as measure spaces.
For each $i\in\N$, take a Borel function $\rho_i$ on $K$ such that $\zt_i=\rho_i\circ\pi$ $\lm$-a.s., where $\zt_i$ appeared in Proposition~\ref{prop:key}.
For $i,j\in\N$, let $z^{i,j}$ be the Radon--Nikodym derivative $d\mu_{\la e_i,e_j\ra}/d\nu$, which is a function on $K$.
Then, $Z^{i,j}=z^{i,j}\circ\pi$ $\lm$-a.s.\ and the result of Proposition~\ref{prop:key} can be rewritten as
\begin{align}\label{eq:relation}
  z^{i,j}=\rho_i\rho_j\quad \nu\text{-a.s.}
\end{align}
Let $\cL^2(Z)$ be a set of all Borel measurable maps $g=(g_i)_{i\in\N}$ from $K$ to $\R^\N$ such that 
$\int_K \left(\sum_{i=1}^\infty |g_i(x)\rho_i(x)|\right)^2\,\nu(dx)<\infty$.
We define a preinner product $\la\cdot,\cdot\ra_Z$ on $\cL^2(Z)$ by
\[
\la g,g'\ra_{Z}=\frac12\int_K \left(\sum_{i=1}^\infty g_i(x)\rho_i(x)\right)\left(\sum_{i=1}^\infty g'_i(x)\rho_i(x)\right)\nu(dx),
\quad g,g'\in\cL^2(Z).
\]
For  $g,g'\in \cL^2(Z)$, we write $g\sim g'$ if $\sum_{i=1}^\infty (g_i(x)-g'_i(x))\rho_i(x)=0$ $\nu$-a.s.\ $x$.
Then, $\sim$ is an equivalence relation.
We denote $\cL^2(Z)/\!\!\sim$ by $L^2(Z)$, which becomes a Hilbert space with inner product $\la\cdot,\cdot\ra_Z$ by abuse of notation.
We identify a function in $\cL^2(Z)$ with its equivalence class.
It should be noted that $L^2(Z)$ is isomorphic to $L^2(K\to\R,\nu)$ by the map $\{g_i\}_{i\in\N}\mapsto 2^{-1/2}\sum_{i=1}^\infty g_i\rho_i$, since $\sum_{i=1}^\infty |\rho_i(x)|>0$ $\nu$-a.s.\,$x$.

We define
\[
\cC=\left\{g=(g_i)_{i\in\N}\in\cL^2(Z)\left|\begin{array}{l}
\text{$g_i$ is a bounded Borel function on $K$}\\
\text{and there exists some $n\in\N$ such that}\\ \text{$g_i=0$ for all $i\ge n$}
\end{array}\right.
\right\}
\]
and let $\tilde\cC$ be the equivalence class of $\cC$ in $L^2(Z)$.
\begin{lem}\label{lem:dense}
Set $\tilde\cC$ is dense in $L^2(Z)$.
\end{lem}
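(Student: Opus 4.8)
The plan is to exploit the isometric isomorphism $\Phi\colon L^2(Z)\to L^2(K\to\R,\nu)$, $\{g_i\}_{i\in\N}\mapsto 2^{-1/2}\sum_{i=1}^\infty g_i\rho_i$, recorded just before the statement. Because $\Phi$ is a bijective isometry, $\tilde\cC$ is dense in $L^2(Z)$ if and only if its image $\Phi(\cC)$ is dense in $L^2(K\to\R,\nu)$; and since $\Phi(\cC)$ is a linear subspace, the latter is equivalent to its orthogonal complement being trivial. So I would reduce the claim to the following assertion: if $h\in L^2(K\to\R,\nu)$ satisfies $\la h,\Phi(g)\ra_{L^2(\nu)}=0$ for every $g\in\cC$, then $h=0$ $\nu$-a.s.

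The key step is to test this orthogonality against the simplest members of $\cC$. For a fixed $i\in\N$ and a Borel set $B\subset K$, let $g^{(i,B)}$ be the sequence whose $i$th component equals $\bone_B$ and whose remaining components vanish. Then $g^{(i,B)}\in\cC$: each component is a bounded Borel function, all but one component is zero, and $g^{(i,B)}\in\cL^2(Z)$ since $\int_K \bone_B\rho_i^2\,d\nu\le\int_K \rho_i^2\,d\nu=\int_K z^{i,i}\,d\nu=\mu_{\la e_i\ra}(K)=2\cE(e_i)=1$. Here $\Phi(g^{(i,B)})=2^{-1/2}\bone_B\rho_i$, so the orthogonality relation reads $\int_B h\rho_i\,d\nu=0$. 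As $B$ ranges over all Borel subsets of $K$, this forces $h\rho_i=0$ $\nu$-a.s., for each $i\in\N$.

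Finally I would combine these countably many identities. Choose a $\nu$-null set $N_0$ outside which $\sum_{i=1}^\infty|\rho_i(x)|>0$ (this almost-everywhere positivity is the second input noted above), and let $N_i$ be a $\nu$-null set outside which $h\rho_i=0$. Then $N:=\bigcup_{i\ge0}N_i$ is $\nu$-null, and for every $x\notin N$ there is some index $i$ with $\rho_i(x)\ne0$, whence $h(x)\rho_i(x)=0$ yields $h(x)=0$. Thus $h=0$ $\nu$-a.s., the orthogonal complement of $\Phi(\cC)$ is trivial, and the density of $\tilde\cC$ follows.

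I do not anticipate a substantive obstacle: the two nontrivial facts that make the argument run—the isometry $\Phi$ and the $\nu$-a.e.\ positivity of $\sum_i|\rho_i|$—are exactly what was established just above the statement. The only point demanding a little care is the null-set bookkeeping when passing from ``$h\rho_i=0$ a.s.\ for each $i$'' to ``$h=0$ a.s.'', which is handled precisely because $I=\N$ is countable, so that a countable union of $\nu$-null sets remains $\nu$-null.
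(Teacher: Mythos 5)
Your proof is correct, but it follows a genuinely different route from the paper's. The paper proves the lemma directly by exhibiting an explicit approximating sequence: given $g=(g_i)_{i\in\N}\in\cL^2(Z)$, it sets $g_i^{(m)}=((-m)\vee g_i)\wedge m$ for $i\le m$ and $g_i^{(m)}=0$ for $i>m$, and observes that $g^{(m)}\in\cC$ converges to $g$ in $L^2(Z)$ (the convergence being a dominated-convergence argument with dominating function $\sum_i|g_i\rho_i|\in L^2(\nu)$). You instead transport the problem to $L^2(K\to\R,\nu)$ via the isometric isomorphism $\Phi$ and kill the orthogonal complement of $\Phi(\cC)$ by testing against the sections $g^{(i,B)}$; your verification that $g^{(i,B)}\in\cC$ via $\int_K\rho_i^2\,d\nu=\int_K z^{i,i}\,d\nu=2\cE(e_i)=1$ and your null-set bookkeeping using $\sum_i|\rho_i|>0$ $\nu$-a.e.\ are both sound. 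The trade-off: the paper's argument is constructive and entirely self-contained, not invoking the isomorphism with $L^2(\nu)$ at all, whereas yours leans on that isomorphism --- a fact the paper asserts just before the lemma but does not prove (its surjectivity requires producing a preimage, e.g.\ $g_i=\sqrt2\,f a_i\rho_i/\sum_k a_k\rho_k^2$). On the other hand, your orthogonality argument yields as a by-product an independent proof of that surjectivity, since the closed image of the isometry $\Phi$ must then contain the closure of $\Phi(\cC)$, which is all of $L^2(\nu)$. Either proof is acceptable here.
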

\begin{proof}
Consider $g=(g_i)_{i\in\N}\in \cL^2(Z)$.
For $m\in\N$, let $g_i^{(m)}(\om)=((-m)\vee g_i(\om))\wedge m$ when $i\le m$ and $g_i^{(m)}(\om)=0$ when $i>m$.
Then, $g^{(m)}=(g_i^{(m)})_{i\in\N}$ belongs to $\cC$ and converges to $g$ in $L^2(Z)$. Therefore, $\tilde\cC$ is dense in $L^2(Z)$.
\end{proof}
  Let us review the theory of additive functionals associated with local and conservative regular Dirichlet forms $(\cE,\cF)$ on $L^2(K,\mu)$ (see \cite[Chapter~5]{FOT} for details).
  From the general theory of regular Dirichlet forms, we can construct a conservative diffusion process $\{X_t\}$ on $K$ defined on a filtered probability space $(\Omega,\cF,P,\{P_x\}_{x\in K},\{\cF_t\}_{t\in[0,\infty)})$ associated with $(\cE,\cF)$.
  Let $E_x$ denote the expectation with respect to $P_x$.
  Under the framework of this paper, the following is a basic fact in the analysis of post-critically finite self-similar sets with regular harmonic structures.
We provide a proof for readers' convenience.
\begin{prop}\label{prop:capacity}
The capacity derived from $(\cE,\cF)$ of any nonempty set in $K$ is uniformly positive.
\end{prop}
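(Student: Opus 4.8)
The plan is to reduce the statement to a uniform lower bound on the capacity of singletons and then to exploit the fact that functions in $\cF$ are continuous. Since capacity is monotone, it suffices to produce a constant $c>0$ with $\mathrm{Cap}(\{x\})\ge c$ for every $x\in K$: for an arbitrary nonempty $A\subset K$ one picks $x\in A$ and concludes $\mathrm{Cap}(A)\ge\mathrm{Cap}(\{x\})\ge c$.

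The key analytic input I would establish is a Sobolev-type inequality: there is a constant $C>0$ such that
\[
\|u\|_\infty^2\le C\|u\|_\cF^2=C\bigl(\cE(u,u)+\textstyle\int_K u^2\,d\mu\bigr),\qquad u\in\cF.
\]
This can be obtained in two equivalent ways. Since $\cF\subset C(K)$ by definition and $\cF$ is complete, the inclusion $\cF\hookrightarrow C(K)$ has a closed graph (uniform convergence and $\cF$-convergence each force $L^2(K,\mu)$-convergence, so the two limits coincide), and the closed graph theorem yields its boundedness. Alternatively, and more quantitatively, one uses the resistance-metric estimate $|u(x)-u(y)|^2\le R(x,y)\,\cE(u,u)$ together with the compactness of $(K,R)$, which gives a finite diameter; averaging $u(x)-u(y)$ against $\mu$ then controls the oscillation of $u$ by $\cE(u,u)$, and combining with $\int_K u^2\,d\mu$ bounds $\|u\|_\infty$.

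With this inequality in hand, fix $x\in K$ and let $U$ be any open neighborhood of $x$. If $u\in\cF$ satisfies $u\ge1$ $\mu$-a.e.\ on $U$, then, because $u$ is continuous and $\mu$ has full support, the relatively open $\mu$-null set $\{u<1\}\cap U$ must be empty; hence $u\ge1$ everywhere on $U$, and in particular $u(x)\ge1$. Thus $\|u\|_\infty\ge1$ and $\|u\|_\cF^2\ge 1/C$ for every admissible $u$, so $\mathrm{Cap}(U)\ge 1/C$. Taking the infimum over open $U\ni x$ gives $\mathrm{Cap}(\{x\})\ge 1/C$, a bound independent of $x$, and the claim follows with $c=1/C$.

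The main obstacle is securing the Sobolev-type embedding and its constant $C$; once this is available, the capacity estimate is short, its only delicate point being the passage from ``$u\ge1$ $\mu$-a.e.\ on $U$'' to ``$u\ge1$ on $U$'', which is precisely where the continuity of functions in $\cF$ and the full support of $\mu$ are used.
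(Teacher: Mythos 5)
Your proof is correct and rests on the same key ingredient as the paper's: a uniform bound on $\|u\|_\infty$ in terms of $\|u\|_{\cF}$, which the paper extracts from the resistance estimate $\bigl(\max_K u-\min_K u\bigr)^2\le C\,\cE(u,u)$ (Kigami's Theorem~3.4) and applies via a two-case split on $\min_K u$ rather than through a packaged Sobolev-type inequality or the closed graph theorem. Your explicit handling of the passage from ``$u\ge 1$ $\mu$-a.e.\ on $U$'' to ``$u\ge 1$ on $U$'' (continuity plus full support of $\mu$) spells out a step the paper leaves implicit, and the reduction to singletons is an inessential repackaging of the same estimate on open sets.
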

\begin{proof}
Since the Dirichlet form $(\cE,\cF)$ is constructed by a regular harmonic structure $(D,r)$, we can utilize \cite[Theorem~3.4]{Ki} to assure that there exists $C>0$ such that
\begin{equation}\label{eq:resistance}
  \left(\max_{x\in K}f(x)-\min_{x\in K}f(x)\right)^2
  \le C\cE(f,f),\quad
  f\in\cF\subset C(K).
\end{equation}
Let $U$ be an arbitrary nonempty open set of $K$.
Let $f$ be a function in $\cF\subset C(K)$ such that $f\ge 1$ $\mu$-a.e.\ on $U$.
If $\min_{x\in K}f(x)\le 1/2$, then from \eqref{eq:resistance}, $\cE(f,f)\ge 1/(4C)$.
Otherwise, since $f>1/2$ on $K$, $\|f\|_{L^2(\mu)}^2\ge 1/4$.
Therefore, the capacity of $U$ is not less than $\min\{1/(4C),1/4\}$. 
This completes the proof.
\end{proof}
From this proposition, the concept of a ``quasi-every point'' is identical to that of ``every point''.
  We may assume that for each $t\in[0,\infty)$, there exists a shift operator $\theta_t\colon \Omega\to\Omega$ that satisfies $X_s\circ\theta_t=X_{s+t}$ for all $s\ge0$.
  A real-valued function $A_t(\om)$, $t\in[0,\infty)$, $\om\in\Omega$, is referred to as an additive functional if the following conditions hold:
  \begin{itemize}
  \item $A_t(\cdot)$ is $\cF_t$-measurable for each $t\ge0$.
  \item There exists a set $\Lambda\in\sigma(\cF_t; t\ge0)$ such that $P_x(\Lambda)=1$ for all $x\in K$, $\theta_t\Lambda\subset\Lambda$ for all $t>0$; moreover, for each $\om\in\Lambda$, $A_\cdot(\om)$ is right continuous and has the left limit on $[0,\infty)$, $A_0(\om)=0$, and
  \[
    A_{t+s}(\om)=A_s(\om)+A_t(\theta_s \om),
    \quad t,s\ge0.
  \]
  \end{itemize}
  A continuous additive functional is defined as an additive functional such that $A.(\om)$ is continuous on $[0,\infty)$ on $\Lambda$.
  A $[0,\infty)$-valued continuous additive functional is referred to as a positive continuous additive functional.
   From \cite[Theorem~5.1.3]{FOT}, for each positive continuous additive functional $A$ of $\{X_t\}$, there exists a measure $\mu_A$ (termed the Revuz measure of $A$) such that the following identity holds for any $t>0$ and nonnegative Borel functions $f$ and $h$ on $K$:
  \begin{align}\label{eq:Revuz}
  \lefteqn{\int_K E_x\left[\int_0^t f(X_s)\,dA_s\right]h(x)\,\mu(dx)}\\
  &=\int_0^t \int_K E_x\left[h(X_s)\right]f(x)\,\mu_A(dx)\,ds.\nonumber
  \end{align}
  Let $P_\mu$ be a probability measure on $\Omega$ defined as $P_\mu(\cdot)=\int_K P_x(\cdot)\,\mu(dx)$.
  Let $E_\mu$ denote the expectation with respect to $P_\mu$.
  We define the energy $e(A)$ of the additive functional $A_t$ as
  \[
  e(A)=\lim_{t\to0}(2t)^{-1}E_\mu(A_t^2)
  \]
  if the limit exists.
  
  Let $\cM$ be the space of martingale additive functionals of $\{X_t\}$ that is defined as
  \[
  \cM=\left\{M\left|\begin{array}{ll}
  \text{$M$ is an additive functional such that for each $t>0$,}\\
  \text{$E_x(M_t^2)<\infty$ and $E_x(M_t)=0$ for every $x\in K$}\end{array}
  \right.\right\}.
  \]
  Due to the (strong) local property of $(\cE,\cF)$, any $M\in\cM$ is a continuous additive functional (\cite[Lemma~5.5.1~(ii)]{FOT}).
  
  Each $M\in\cM$ admits a positive continuous additive functional $\la M\ra$ referred to as the quadratic variation associated with $M$ that satisfies
  \[
  E_x[\la M\ra_t]=E_x[M_t^2], \ t>0\text{ for every $x\in K$},
  \] 
  and the following equation holds 
  \begin{equation}\label{eq:energyidentity}
  e(M)=\frac12 \mu_{\la M\ra}(K). 
  \end{equation}
  We set $\maruM=\{M\in\cM\mid e(M)<\infty\}$.
  Then, $\maruM$ is a Hilbert space with an inner product $e(M,M'):=(e(M+M')-e(M-M'))/4$ (\cite[Theorem~5.2.1]{FOT}).
  
  The space $\cN_c$ of the continuous additive functionals of zero energy is defined as
  \[
  \cN_c=\left\{N\left|
  \begin{array}{l}\text{$N$ is a continuous additive functional},\\
  \text{$e(N)=0$, $E_x[|N_t|]<\infty$ for all $x\in K$ and $t>0$}
  \end{array}\right\}\right..
  \]
  For each $u\in\cF\subset C(K)$, there exists a unique expression as
  \[
  u(X_t)-u(X_0)=M_t^{[u]}+N_t^{[u]},\qquad
  M^{[u]}\in\maruM,~~
  N^{[u]}\in\cN_c.
  \]
  (See \cite[Theorem~5.2.2]{FOT}.)
  From \cite[Theorem~5.2.3]{FOT}, $\mu_{\la M^{[u]}\ra}$ equals $\mu_{\la u\ra}$.
  
  For $M\in\maruM$ and $f\in L^2(K,\mu_{\la M\ra})$, we can define the stochastic integral $f\bullet M$ (\cite[Theorem~5.6.1]{FOT}), which is a unique element in $\maruM$ such that
  \[
  e(f\bullet M,L)=\frac12\int_Kf(x)\mu_{\la M,L\ra}(dx)\quad
  \text{for all }L\in\maruM.
  \]
  Here, $\mu_{\la M,L\ra}=(\mu_{\la M+L\ra}-\mu_{\la M-L\ra})/4$.
  We may write $\int_0^\cdot f(X_t)\,dM_t$ for $f\bullet M$ since $(f\bullet M)_t=\int_0^t f(X_s)\,dM_s$, $t>0$, $P_x$-a.s.\ for all $x\in K$ as long as $f$ is a continuous function on $K$ (\cite[Lemma~5.6.2]{FOT}).
  We follow the standard textbook~\cite{FOT} and use the notation $f\bullet M$ to denote the stochastic integral with respect to martingale additive functionals.
  From \cite[Lemma~5.6.2]{FOT}, we also have
  \begin{align}\label{eq:integral}
  d\mu_{\la f\bullet M,L\ra}=f\cdot d\mu_{\la M,L\ra},\quad
  L\in\maruM.
  \end{align}
  
  Now, for $g=(g_i)_{i\in\N}\in\cC$, we define 
\begin{align}\label{eq:chi}
\chi(g)=\sum_{i=1}^\infty g_i\bullet M^{[e_i]}\in\maruM.
\end{align}
  We note that the sum is in fact a finite sum.
\begin{lem}\label{lem:preserve}
The map $\chi\colon \cC\to\maruM$ preserves the (pre-)inner products.
\end{lem}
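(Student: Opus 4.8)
The plan is to verify directly that $e(\chi(g),\chi(g'))=\la g,g'\ra_Z$ for all $g,g'\in\cC$, exploiting the fact that for such $g,g'$ the defining sums in \eqref{eq:chi} are finite. First I would expand by bilinearity of the inner product $e(\cdot,\cdot)$ on $\maruM$, reducing the problem to computing the single cross terms $e(g_i\bullet M^{[e_i]},\,g'_j\bullet M^{[e_j]})$ for the finitely many indices $i,j$ with $g_i,g'_j\ne0$. Since these are finite sums, no convergence issue arises at this stage and interchanging summation with integration is unproblematic.

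The core computation is then a chain of identities using the stochastic integral calculus recalled just above. Applying the defining property $e(f\bullet M,L)=\tfrac12\int_K f\,d\mu_{\la M,L\ra}$ with $f=g_i$, $M=M^{[e_i]}$, and $L=g'_j\bullet M^{[e_j]}$, together with \eqref{eq:integral} in the form $d\mu_{\la M^{[e_i]},\,g'_j\bullet M^{[e_j]}\ra}=g'_j\,d\mu_{\la M^{[e_i]},M^{[e_j]}\ra}$, I obtain
\[
e\bigl(g_i\bullet M^{[e_i]},\,g'_j\bullet M^{[e_j]}\bigr)=\frac12\int_K g_i\,g'_j\,d\mu_{\la M^{[e_i]},M^{[e_j]}\ra}.
\]
The next step is to replace the energy measure of the martingale parts by that of the functions themselves: from the identity $\mu_{\la M^{[u]}\ra}=\mu_{\la u\ra}$ recorded above and polarization, one has $\mu_{\la M^{[e_i]},M^{[e_j]}\ra}=\mu_{\la e_i,e_j\ra}$, so the integral becomes $\tfrac12\int_K g_i g'_j\,d\mu_{\la e_i,e_j\ra}$.

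Finally, I would invoke the definition $z^{i,j}=d\mu_{\la e_i,e_j\ra}/d\nu$ and the key relation \eqref{eq:relation}, namely $z^{i,j}=\rho_i\rho_j$ $\nu$-a.s., which is exactly the payoff of Proposition~\ref{prop:key}. Substituting and summing over the finitely many $i,j$ gives
\[
e(\chi(g),\chi(g'))=\frac12\int_K\Bigl(\sum_i g_i\rho_i\Bigr)\Bigl(\sum_j g'_j\rho_j\Bigr)\,d\nu=\la g,g'\ra_Z,
\]
as desired. Strictly speaking this proof carries no serious obstacle, since the analytic heavy lifting — the rank-one factorization $z^{i,j}=\rho_i\rho_j$ — has already been accomplished in Proposition~\ref{prop:key}; the only points requiring a little care are the justification that $\chi(g)$ genuinely lands in $\maruM$ (so that the energy pairing is defined and finite, which follows from boundedness of the $g_i$ and finiteness of $\mu_{\la e_i\ra}(K)$) and the bookkeeping that makes the interchange of the finite double sum with the integral legitimate.
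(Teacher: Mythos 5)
Your proof is correct and is essentially the paper's own argument run in the opposite direction: the same chain of identities (bilinearity over the finite sums, the defining property of $f\bullet M$ together with \eqref{eq:integral}, the identification $\mu_{\la M^{[e_i]},M^{[e_j]}\ra}=\mu_{\la e_i,e_j\ra}$ by polarization, and finally \eqref{eq:relation}). No substantive difference.
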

\begin{proof}
  Take
 $g=(g_i)_{i\in\N}\in\cC$ and $g'=(g'_i)_{i\in\N}\in\cC$.
  Since $\mu_{\la e_i,e_j\ra}$ is equal to $\mu_{\la M^{[e_i]},M^{[e_j]}\ra}$, we obtain
  \begin{align*}
  \la g,g'\ra_{Z}
  &=\frac12\int_K \left(\sum_{i=1}^\infty g_i(x)\rho_i(x)\right)\left(\sum_{j=1}^\infty g'_j(x)\rho_j(x)\right)\nu(dx)\\
  &=\frac12\sum_{i,j}\int_K g_i(x) g'_j(x) z^{i,j}(x)\,\nu(dx)\quad
  (\text{by \eqref{eq:relation})}\\
  &=\frac12\sum_{i,j}\int_K g_i(x) g'_j(x) \,\mu_{\la e_i,e_j\ra}(dx)\\
  &=\frac12\sum_{i,j}\int_K g_i(x)g_j'(x)\,\mu_{\la M^{[e_i]},M^{[e_j]}\ra}(dx)\\
  &=\sum_{i,j}e(g_i\bullet M^{[e_i]},g'_j\bullet M^{[e_j]})\\
  &=e(\chi(g),\chi(g')).
  \end{align*}
This completes the proof.
\end{proof}
By virtue of \cite[Lemma~5.6.3]{FOT} and the fact that the linear span of $\{e_i\}_{i\in\N}$ is dense in $\cF$, $\chi(\cC)$ is dense in $\maruM$.
Therefore, along with Lemma~\ref{lem:dense} and Lemma~\ref{lem:preserve}, $\chi$ can extend to an isomorphism from $L^2(Z)$ to $\maruM$.
By the routine argument, we can prove \eqref{eq:chi} for all $g\in L^2(Z)$, where the infinite sum is considered in the topology of $\maruM$.

The AF-martingale dimension associated with $(\cE,\cF)$ on $L^2(K,\mu)$ is one in the following sense, which is our main theorem.
\begin{thm}\label{thm:main}
  There exists $M^1\in\maruM$ such that, for any $M\in\maruM$, there exists $f\in L^2(K,\mu_{\la M^1\ra})$ that satisfies $M=f\bullet M^1$.
  Moreover, we can take $M^1$ so that $\mu_{\la M^1\ra}=\nu$.
\end{thm}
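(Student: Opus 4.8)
The plan is to exploit the two isometric isomorphisms established above: the map $\chi\colon L^2(Z)\to\maruM$ (Lemma~\ref{lem:preserve}, extended to all of $L^2(Z)$) and the map $\Phi\colon L^2(Z)\to L^2(K,\nu)$ sending $g=(g_i)_{i\in\N}$ to $2^{-1/2}\sum_{i=1}^\infty g_i\rho_i$, which is an isomorphism by the remark following Lemma~\ref{lem:dense}. Composing them identifies $\maruM$ with the space $L^2(K,\nu)$ of \emph{scalar} functions, and under this identification the stochastic-integral action $f\bullet(\cdot)$ corresponds to multiplication by $f$. The rank-one relation \eqref{eq:relation} is precisely what collapses the a priori $\N$-indexed data to a single scalar field, and the element of $\maruM$ corresponding to the constant function will serve as the generator $M^1$.

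First I would record the energy-measure identity
\[
\mu_{\la \chi(g),\chi(h)\ra}=\Bigl(\sum_{i}g_i\rho_i\Bigr)\Bigl(\sum_{j}h_j\rho_j\Bigr)\,\nu,
\qquad g,h\in L^2(Z).
\]
For $g,h\in\cC$ this is immediate: $\chi(g)=\sum_i g_i\bullet M^{[e_i]}$ is then a finite sum, so by bilinearity, \eqref{eq:integral}, and the equalities $\mu_{\la M^{[e_i]},M^{[e_j]}\ra}=\mu_{\la e_i,e_j\ra}=z^{i,j}\nu=\rho_i\rho_j\,\nu$ (the last by \eqref{eq:relation}) one gets $d\mu_{\la\chi(g),\chi(h)\ra}=\sum_{i,j}g_ih_j\rho_i\rho_j\,d\nu$. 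The extension to general $g,h\in L^2(Z)$ follows by approximating in $\cC$ (Lemma~\ref{lem:dense}) and using continuity of $M\mapsto\mu_{\la M\ra}$ with respect to the energy norm, via \eqref{eq:energyidentity} and polarization.

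Next I would construct $M^1$. By surjectivity of $\Phi$ there is $g^1\in L^2(Z)$ with $\sum_i g^1_i\rho_i=1$ $\nu$-a.s.; set $M^1=\chi(g^1)$. Applying the identity above with $g=h=g^1$ gives $\mu_{\la M^1\ra}=(\sum_i g^1_i\rho_i)^2\nu=\nu$, the normalization claimed in the theorem. For the representation, take an arbitrary $M\in\maruM$, let $g=\chi^{-1}(M)$, and put $f=\sum_i g_i\rho_i$; since $g\in L^2(Z)$ we have $f\in L^2(K,\nu)=L^2(K,\mu_{\la M^1\ra})$, so $f\bullet M^1$ is well defined. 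To see $M=f\bullet M^1$, I would test against an arbitrary $L=\chi(h)\in\maruM$: by the defining property of the stochastic integral and the energy-measure identity,
\[
e(f\bullet M^1,L)=\tfrac12\int_K f\,d\mu_{\la M^1,L\ra}
=\tfrac12\int_K f\Bigl(\sum_i g^1_i\rho_i\Bigr)\Bigl(\sum_j h_j\rho_j\Bigr)d\nu
=\tfrac12\int_K f\Bigl(\sum_j h_j\rho_j\Bigr)d\nu,
\]
while $e(M,L)=\la g,h\ra_Z=\tfrac12\int_K(\sum_i g_i\rho_i)(\sum_j h_j\rho_j)\,d\nu$ equals the same quantity because $f=\sum_i g_i\rho_i$. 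As $L$ ranges over $\maruM$ this forces $M=f\bullet M^1$.

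The one genuinely technical point is the passage from finite combinations in $\cC$ to general elements of $L^2(Z)$ in the energy-measure identity: for finite sums everything reduces to \eqref{eq:integral} and \eqref{eq:relation}, but the limiting argument must control the energy measures themselves (not merely their total masses) under convergence in $\maruM$. Once this is secured, the remainder is routine Hilbert-space bookkeeping, and the rank-one structure supplied by Proposition~\ref{prop:key} does all the conceptual work.
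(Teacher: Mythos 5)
Your proposal is correct and follows essentially the same route as the paper: both exploit the isometry $\chi\colon L^2(Z)\to\maruM$ together with the rank-one identity \eqref{eq:relation} to identify $\maruM$ with $L^2(K,\nu)$ and take $M^1$ to be the image of a representative of the constant function $1$ (the paper writes it explicitly as $h_i=a_i\rho_i/\sum_k a_k\rho_k^2$, which is exactly how one proves the surjectivity you invoke). The only cosmetic difference is that you verify $M=f\bullet M^1$ by testing $e(\cdot,L)$ against all $L\in\maruM$ via the defining property of the stochastic integral, whereas the paper uses the associativity $f\bullet(h_i\bullet M^{[e_i]})=(fh_i)\bullet M^{[e_i]}$ from \cite[Corollary~5.6.1]{FOT}; the approximation issue you flag is the same one the paper handles by approximating in $\cC$.
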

This theorem states that every martingale additive functional with finite energy is expressed by a stochastic integral with respect to only one fixed martingale additive functional.
Note that considering martingale additive functionals instead of pure martingales under some $P_x$ seems more natural in the framework of time-homogeneous Markov processes (or of the theory of Dirichlet forms).
Of course, every martingale additive functional is a martingale under $P_x$ for every $x$, but it is doubtful whether a pure martingale (under some $P_x$) is derived from a certain martingale additive functional.
Therefore, the connection between AF-martingale dimensions and the Davis--Varaiya invariants is not straightforward.
A general theory of AF-martingale dimensions has been discussed by Motoo and Watanabe~\cite{MW}, which is prior to the work by Davis and Varaiya~\cite{DV}.
Clarifying the connection between AF-martingale dimensions and the Davis--Varaiya invariants (whose definition seems ``too general'' from our standpoint) should be discussed elsewhere, in a more general framework.
\begin{proof}[Proof of Theorem~\protect\ref{thm:main}]
  First, by taking \eqref{eq:zt} into consideration, we note that
  \[
  0<\sum_{j=1}^\infty a_j \rho_j(x)^2
  =\sum_{j=1}^\infty a_j \frac{z^{j,\a}(x)^2}{z^{\a,\a}(x)}
  \le \sum_{j=1}^\infty a_j z^{j,j}(x)=1.
  \]
  For each $i\in\N$, we define
\[
h_i=a_i\rho_i\left/\left(\sum_{k=1}^\infty a_k \rho_k^2\right)\right..
\]
  Since $h_i\rho_i\ge0$ and $\sum_{i=1}^\infty h_i\rho_i=1$, $h=(h_i)_{i\in\N}$ belongs to $\cL^2(Z)$.
  We also define $M^1\in\maruM$ as the image of the equivalence class of $h$ in $L^2(Z)$ by $\chi$.
  Then, at least formally,
  \begin{align*}
  d\mu_{\la M^1\ra}
  &=d\mu_{\left\la \sum_{i=1}^\infty a_i \rho_i\left(\sum_{k=1}^\infty a_k \rho_k^2\right)^{-1}\bullet M^{[e_i]},
  \sum_{j=1}^\infty a_j \rho_j \left(\sum_{k=1}^\infty a_k \rho_k^2\right)^{-1}\bullet M^{[e_j]}\right\ra}\\
  &=\left(\sum_{k=1}^\infty a_k \rho_k^2\right)^{-2}\sum_{i,j}a_i\rho_i a_j\rho_j \,d\mu_{\la M^{[e_i]},M^{[e_j]}\ra}
  \quad\text{(by \eqref{eq:integral})}\\
  &=\left(\sum_{k=1}^\infty a_k \rho_k^2\right)^{-2}\sum_{i,j}a_i\rho_i a_j\rho_j z^{i,j}\,d\nu\\
  &=\left(\sum_{k=1}^\infty a_k \rho_k^2\right)^{-2}\sum_{i,j}a_i\rho_i^2 a_j\rho_j^2\,d\nu
  \quad\text{(by \eqref{eq:relation})}\\
  &=d\nu.
  \end{align*}
  This calculation is justified by approximating $h$ by the elements in $\cC$ and performing a similar calculation.
  
  Let $g=(g_i)_{i\in\N}\in \cL^2(Z)$ such that $\sum_{j=1}^\infty |g_j\rho_j|$ is a bounded function.
  We define $f=\sum_{j=1}^\infty g_j\rho_j$ and $g'_i=f h_i$ for $i\in\N$.
  We have $\sum_{i=1}^\infty |g'_i\rho_i|=|f|$ and $\sum_{i=1}^\infty g'_i\rho_i=f$, which imply that $g'=(g'_i)_{i\in\N}$ belongs to $\cL^2(Z)$ and $g\sim g'$.
  Then,
  $f\bullet M^1=\sum_{i=1}^\infty f\bullet(h_i\bullet M^{[e_i]})$ and $\chi(g')=\sum_{i=1}^\infty (fh_i)\bullet M^{[e_i]}$.
  According to \cite[Corollary~5.6.1]{FOT}, these two additive functionals coincide.
  In other words, $\chi(g)=f\bullet M^1$.
  We also have 
\[
  \la g,g\ra_{Z}=e(f\bullet M^1)=\int_K f^2\,d\mu_{\la M^1\ra}=\int_K f^2\,d\nu.
\]
Based on the approximation argument using this relation, for general $g\in L^2(Z)$, we can take some $f\in L^2(K,\mu_{\la M^1\ra})$ such that $\chi(g)=f\bullet M^1$.
This completes the proof.
\end{proof}
\begin{rem}
\begin{enumerate}
\item The underlying measure $\mu$ on $K$ does not play an important role in this paper.
\item In ``nondegenerate'' examples of fractals, only a finite number of harmonic functions $\{e_i\}$ are required for the argument in this section. 
Such cases are treated in \cite{Ku1,Ku2}.
However, in order to include ``degenerate'' examples such as  the Vicsek set (example in the bottom left part of  Figure~\ref{fig:1}), it is not sufficient to consider only harmonic functions.
\end{enumerate}
\end{rem}
\section{Concluding remarks}
In this section, we remark on Proposition~\ref{prop:key}.
In Section~3, the functions $\{e_i\}_{i\in I}$ were considered to be piecewise harmonic functions such that $2\cE(e_i)=1$.
In fact, Proposition~\ref{prop:key} is true for an arbitrary choice of $\{e_i\}$ in $\cF$.
More precisely, let $J$ be a finite set $\{1,\ldots,N_0\}$ or an infinite set $\N$. 
Let $\{f_i\}_{i\in J}$ be a sequence in $\cF$.
Take a real sequence $\{b_i\}_{i\in J}$ such that $b_i>0$ for every $i\in J$ and $\hat\lm:=\sum_{i\in J}b_i\lm_{\la f_i\ra}$ is a probability measure on $\Sg$.
For $i,j\in J$, we denote the Radon--Nikodym derivative $\lm_{\la f_i,f_j\ra}/d\hat\lm$ by $\hat Z^{i,j}$ and obtain the following.
\begin{prop}\label{prop:key2}
There exist measurable functions $\{\hat\zt_i\}_{i\in J}$ on $\Sg$ such that, for every $i,j\in J$, $\hat Z^{i,j}(\om)=\hat\zt_i(\om)\hat\zt_j(\om)$ $\hat\lm$-a.s.\ $\om$.
\end{prop}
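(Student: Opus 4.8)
The plan is to reduce Proposition~\ref{prop:key2} to Proposition~\ref{prop:key} by an approximation argument, transferring the rank-one structure from a dense family of piecewise harmonic functions to all of $\cF$. First I would use the density of $\cH_*$ in $\cF$ to fix a sequence $\{e_k\}_{k\in\N}$ of piecewise harmonic functions with $2\cE(e_k)=1$ whose linear span $\cD$ is dense in $\cF$, together with weights $a_k>0$, $\sum_k a_k=1$. Applying Proposition~\ref{prop:key} to this family yields measurable functions $\{\zt_k\}$ with $d\lm_{\la e_k,e_l\ra}/d\lm=\zt_k\zt_l$ $\lm$-a.s., where $\lm=\sum_k a_k\lm_{\la e_k\ra}$. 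The goal is then to produce, for every $f\in\cF$, a single measurable function $\zt_f$ on $\Sg$ so that $d\lm_{\la f,g\ra}/d\lm=\zt_f\zt_g$ $\lm$-a.s.\ for all $f,g\in\cF$; specializing to $f=f_i$, $g=f_j$ and dividing by $d\hat\lm/d\lm$ will then give the proposition.

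The technical core is a Kunita--Watanabe argument. I would first establish the bound $|\lm_{\la f,g\ra}(A)|\le \lm_{\la f\ra}(A)^{1/2}\lm_{\la g\ra}(A)^{1/2}$ for measurable $A\subset\Sg$, which follows from the nonnegativity of the measure $\lm_{\la f+tg\ra}$ together with the quadratic expansion $\lm_{\la f+tg\ra}=\lm_{\la f\ra}+2t\lm_{\la f,g\ra}+t^2\lm_{\la g\ra}$. Combined with the resulting triangle inequality for $A\mapsto\lm_{\la\cdot\ra}(A)^{1/2}$ and the density of $\cD$, this shows $\lm_{\la f\ra}\ll\lm$ for every $f\in\cF$: if $\lm(A)=0$ then $\lm_{\la e_k\ra}(A)=0$ for all $k$, hence $\lm_{\la g\ra}(A)=0$ for $g\in\cD$, and then $\lm_{\la f\ra}(A)^{1/2}\le\lm_{\la f-g\ra}(A)^{1/2}\le(2\cE(f-g))^{1/2}$ is arbitrarily small. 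Writing $W^f=d\lm_{\la f\ra}/d\lm$ and $W^{f,g}=d\lm_{\la f,g\ra}/d\lm$, the bound yields $|W^{f,g}|\le (W^f)^{1/2}(W^g)^{1/2}$ $\lm$-a.s. Since $\int_\Sg(\zt_g)^2\,d\lm=\lm_{\la g\ra}(\Sg)=2\cE(g)$ for $g=\sum c_k e_k\in\cD$ with $\zt_g:=\sum c_k\zt_k$, the assignment $g\mapsto\zt_g$ is an isometry from $\cD$ with the seminorm $\sqrt{2\cE(\cdot)}$ into $L^2(\Sg,\lm)$; hence for $g_n\to f$ in $\cF$ with $g_n\in\cD$ the sequence $\zt_{g_n}$ is Cauchy in $L^2(\lm)$, and I define $\zt_f$ as its limit, well defined independently of the approximating sequence. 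Finally, for $f,g\in\cF$ with approximants $g_n\to f$, $h_n\to g$, both $\zt_{g_n}\zt_{h_n}$ and $W^{g_n,h_n}$ converge in $L^1(\lm)$ — the former because $\zt_{g_n}\to\zt_f$ and $\zt_{h_n}\to\zt_g$ in $L^2(\lm)$, the latter by the Kunita--Watanabe bound together with $\int W^{g_n-f}\,d\lm=2\cE(g_n-f)$ — and since they agree for the approximants, $W^{f,g}=\zt_f\zt_g$ $\lm$-a.s.

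It remains to pass from $\lm$ to $\hat\lm$. Since each $\lm_{\la f_i\ra}\ll\lm$, the density $W:=d\hat\lm/d\lm=\sum_{i\in J}b_i W^{f_i}$ exists and is strictly positive $\hat\lm$-a.s. On $\{W>0\}$ one has $\hat Z^{i,j}=W^{f_i,f_j}/W$, so setting $\hat\zt_i=\zt_{f_i}/\sqrt{W}$ on $\{W>0\}$ (and $0$ elsewhere) gives measurable functions with $\hat\zt_i\hat\zt_j=\zt_{f_i}\zt_{f_j}/W=W^{f_i,f_j}/W=\hat Z^{i,j}$ $\hat\lm$-a.s., as required. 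I expect the main obstacle to be the bookkeeping in the extension step: reconciling the $\lm$-a.s.\ identities, whose null sets depend on the pair of functions, with the $L^1$ and $L^2$ limits, so that a single family $\{\zt_f\}_{f\in\cF}$ simultaneously represents all derivatives $W^{f,g}$. The Kunita--Watanabe inequality is precisely the tool that makes these approximations uniform and renders the passage to the limit routine.
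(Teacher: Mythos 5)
Your proposal is correct, and while it shares the paper's overall architecture (reduce to Proposition~\ref{prop:key} via a countable family of normalized piecewise harmonic functions, prove $\lm_{\la f\ra}\ll\lm$ and a Cauchy--Schwarz/Kunita--Watanabe bound for the measures $\lm_{\la\cdot,\cdot\ra}$ --- these are exactly the paper's Lemmas~\ref{lem:equivalent} and~\ref{lem:ineq2} --- and finally divide by $\sqrt{d\hat\lm/d\lm}$), the mechanism you use for the crucial extension step is genuinely different. The paper chooses $\{e_i\}$ dense in the \emph{unit sphere} of mean-zero functions and approximates each $f_i$ by scalar multiples $c_i e_{n_i(k)}$ of single basis elements; since Proposition~\ref{prop:key} determines each $\zt_{n_i(k)}$ only up to sign, the paper must pass to a.s.\ convergent subsequences of the absolute values and the pairwise products, and then restore coherent signs by introducing the sets $\Om(\a)$ and the sign functions $\tau_k(\om)$ tied to a reference index $\a$. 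You instead choose $\{e_k\}$ with dense \emph{linear span} and set $\zt_g=\sum_k c_k\zt_k$ for $g=\sum_k c_ke_k$; bilinearity of $\lm_{\la\cdot,\cdot\ra}$ gives $d\lm_{\la g,h\ra}/d\lm=\zt_g\zt_h$ on the span with signs automatically coherent, and $\|\zt_g\|_{L^2(\lm)}^2=2\cE(g)$ makes $g\mapsto\zt_g$ a linear isometry for the energy seminorm, so the extension to all of $\cF$ is a routine $L^2$-limit and the product formula follows from $L^1$-convergence of both sides. Your route buys a cleaner argument with no diagonal extraction and no sign bookkeeping; the paper's route buys pointwise a.s.\ convergent representatives and works with a family dense only in the unit sphere rather than with a dense linear span, but at the cost of the $\tau_k$ machinery. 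The only points that deserve explicit care in your write-up are the well-definedness of $\zt_g$ when $g$ admits two representations in the span (which follows since $\zt_{g-g'}^2=d\lm_{\la g-g'\ra}/d\lm=0$ a.s.) and the choice of a single $\lm$-null set outside which all countably many identities $Z^{k,l}=\zt_k\zt_l$ hold simultaneously; both are immediate.
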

\begin{proof}
 We may assume that $\int_K f_i\,d\mu=0$ for every $i\in J$ without loss of generality.
 In the setting of Section~3, take $I=\N$ and $\{e_i\}_{i\in I}$ so that $\{e_i\}_{i\in I}$ is dense in $\{f\in\cF\mid \int_K f\,d\mu=0,\,2\cE(f)=1\}$ in the topology of $\cF$. 
 The definitions of $\{a_i\}_{i\in\N}$, $\lm$, $Z^{i,j}$, and $\zeta_i$ are the same as those in Section~3.
 First, we prove the following.
\begin{lem}\label{lem:equivalent}
For any $f\in\cF$, $\lm_{\la f\ra}$ is absolutely continuous with respect to $\lm$.
\end{lem}
\begin{proof}
It should be noted that for any measurable set $A$ of $\Sg$ and $g\in\cF$,
\begin{align}\label{eq:ineq1}
\left|\lm_{\la f\ra}(A)^{1/2}-\lm_{\la g\ra}(A)^{1/2}\right|^2\le \lm_{\la f-g\ra}(A).
\end{align}
Indeed, this is proved from the inequalities $\lm_{\la s f-tg\ra}(A)\ge0$ for all $s,t\in\R$.

For the proof of the claim, we may assume $\int_K f\,d\mu=0$.
Take $c\ge0$ and a sequence of natural numbers $\{n(k)\}_{k=1}^\infty$ such that $g_k:=ce_{n(k)}$ converges to $f$ in $\cF$ as $k\to\infty$.
Suppose $\lm(A)=0$. Then, $\lm_{\la g_{k}\ra}(A)=0$ for all $k\in\N$.
From \eqref{eq:ineq1},
\begin{align*}
\left|\lm_{\la f\ra}(A)^{1/2}\right|^2\le \lm_{\la f-g_{k}\ra}(A)
\le 2\cE(f-g_{k},f-g_{k})\to0
\qquad\text{as }k\to\infty.
\end{align*}
Therefore, $\lm_{\la f\ra}(A)=0$ and we have $\lm_{\la f\ra}\ll\lm$.
\end{proof}
In particular, $\hat\lm\ll\lm$ according to this lemma.
Next, we prove the following.
\begin{lem}\label{lem:ineq2}
For any $f,g\in\cF$,
\begin{align}\label{eq:ineq2}
\left(\sqrt{\frac{d\lm_{\la f\ra}}{d\lm}}-\sqrt{\frac{d\lm_{\la g\ra}}{d\lm}}\right)^2\le \frac{d\lm_{\la f-g\ra}}{d\lm}
\qquad \lm\text{-a.s.}
\end{align}
\end{lem}
\begin{proof}
Since $d\lm_{\la sf-tg\ra}/d\lm\ge0$ $\lm$-a.s.\ for all $s,t\in\R$, we have, for $\lm$-a.s., for all $s,t\in\Q$,
\[
  s^2\frac{d\lm_{\la f\ra}}{d\lm}-2st\frac{d\lm_{\la f,g\ra}}{d\lm}+t^2\frac{d\lm_{\la g\ra}}{d\lm}\ge0.
\]
Therefore, 
\[
\left(\frac{d\lm_{\la f,g\ra}}{d\lm}\right)^2\le \frac{d\lm_{\la f\ra}}{d\lm}\cdot \frac{d\lm_{\la g\ra}}{d\lm} \quad \lm\mbox{-a.s.}
\]
Equation~\eqref{eq:ineq2} is derived from this inequality.
\end{proof}
 For each $i\in J$, take $c_i\ge0$ and a sequence of natural numbers $\{n_i(k)\}_{k=1}^\infty$ such that $c_{i}e_{n_i(k)}$ converges to $f_i$ in $\cF$ as $k\to\infty$.
 Let $g_{i,k}=c_{i}e_{n_i(k)}$.

Let $i,j\in J$ and $\sg\in\{0,\pm1\}$.
From Lemma~\ref{lem:ineq2}, we have
\begin{align*}
\int_\Sg &\left(\sqrt{\frac{d\lm_{\la f_i+\sg f_j\ra}}{d\lm}}-\sqrt{\frac{d\lm_{\la g_{i,k}+\sg g_{j,k}\ra}}{d\lm}}\right)^2\,d\lm\\
&\le \int_\Sg \frac{d\lm_{\la (f_i-g_{i,k})+\sg(f_j-g_{j,k})\ra}}{d\lm}\,d\lm\\
&=2\cE((f_i-g_{i,k})+\sg(f_j-g_{j,k}),(f_i-g_{i,k})+\sg(f_j-g_{j,k}))\\
&\to0 \qquad\text{as }k\to\infty.
\end{align*}
Since $d\lm_{\la g_{i,k}+\sg g_{j,k}\ra}/{d\lm}=(c_{i}\zt_{n_i(k)}+\sg c_{j}\zt_{n_j(k)})^2$ from Proposition~\ref{prop:key}, 
$|c_{i}\zt_{n_i(k)}+\sg c_{j}\zt_{n_j(k)}|$ converges to $\sqrt{d\lm_{\la f_i+\sg f_j\ra}/d\lm}$ in $L^2(\lm)$ as $k\to\infty$.
By the diagonal argument, we may assume that $|c_{i}\zt_{n_i(k)}+\sg c_{j}\zt_{n_j(k)}|$ converges $\lm$-a.s.\ as $k\to\infty$ for all $i,j\in J$ and $\sg\in\{0,\pm1\}$.
In particular, $|c_{i}\zt_{n_i(k)}|$ converges to $\sqrt{d\lm_{\la f_i\ra}/d\lm}$ $\lm$-a.s.
Moreover, for $i,j\in J$, $\lm$-a.s.,
\begin{align}\label{eq:crossterm}
c_{i}\zt_{n_i(k)}c_{j}\zt_{n_j(k)}
&=\frac14\{(c_{i}\zt_{n_i(k)}+c_{j}\zt_{n_j(k)})^2-(c_{i}\zt_{n_i(k)}-c_{j}\zt_{n_j(k)})^2\}\\
&\stackrel{k\to\infty}\longrightarrow\frac14\left(\frac{d\lm_{\la f_i+f_j\ra}}{d\lm}-\frac{d\lm_{\la f_i-f_j\ra}}{d\lm}\right)
= \frac{d\lm_{\la f_i,f_j\ra}}{d\lm}.\nonumber
\end{align}
For $\a\in J$, we define 
\[
\Om(\a)=\left\{\om\in\Sg\left|\, \frac{d\lm_{\la f_i\ra}}{d\lm}(\om)=0\ (i=1,\ldots,\a-1)\text{ and }\frac{d\lm_{\la f_\a\ra}}{d\lm}(\om)>0\right\}\right..
\]
Clearly, $\lm(\{\frac{d\hat\lm}{d\lm}(\om)>0\}\setminus\bigcup_{\a\in J} \Om(\a))=0$.
Let $\a\in J$ and $\om\in\Om(\a)$.
For $k\in\N$, we define
\[
\tau_k(\om)=\left\{
\begin{array}{cl}
1&\text{if }\zt_{n_\a(k)}(\om)\ge0\\
-1&\text{otherwise.}
\end{array}\right.
\]
Then, $\tau_k(\om)c_{\a}\zt_{n_\a(k)}(\om)$ converges to $\sqrt{d\lm_{\la f_\a\ra}/d\lm(\om)}>0$ $\lm$-a.s.\ on $\Om(\a)$.
By combining this with \eqref{eq:crossterm} with $j=\a$, $\tau_k(\om)c_{i}\zt_{n_i(k)}(\om)$ converges $\lm$-a.s.\ on $\Om(\a)$.
We denote the limit by $\tilde\zt_i(\om)$.
Then, from \eqref{eq:crossterm} again, we have ${d\lm_{\la f_i,f_j\ra}}/{d\lm}=\tilde\zt_i\tilde\zt_j$ $\lm$-a.s.\ on $\Om(\a)$, for every $i,j\in J$.
Therefore, by defining
\[
\hat\zt_i(\om)=\left\{\begin{array}{cl}
\tilde\zt_i(\om)\left/\sqrt{\frac{d\hat\lm}{d\lm}(\om)}\right. & \text{if }\frac{d\hat\lm}{d\lm}(\om)>0\\
0&\text{otherwise},
\end{array}\right.
\]
we obtain the claim of the proposition.
\end{proof}

We will turn to the next remark.
Take $f_1,\ldots,f_n\in\cF$ and consider the map $\Phi\colon K\ni x\mapsto (f_1(x),\ldots,f_n(x))\in\R^n$.
Suppose that $\Phi$ is injective. Then, since $\Phi$ is continuous, $K$ and $\Phi(K)$ are homeomorphic.
(For example, when $K$ is a $d$-dimensional Sierpinski gasket, $n=d-1$, and $f_i$ $(i\in\{1,\ldots,d-1\})$ is a harmonic function with $f_i(p_j)=\dl_{ij}$, $j\in\{1,\ldots,d\}$, this is true by \cite[Theorem~3.6]{Ki1}.)
Take $a_i>0$, $i=1,\ldots,n$ such that $\nu:=\sum_{i=1}^n a_i\mu_{\la f_i\ra}$ is a probability measure on $K$.
We denote the Radon--Nikodym derivative $d\mu_{\la f_i,f_j\ra}/d\nu$ by $z^{i,j}$, $i,j=1,\ldots,n$.
Let $z(x)=(z^{i,j}(x)))_{i,j=1}^n$.

Let $G$ be a $C^1$-class function on $\R^n$.
We define $g(x)=G(f_1(x),\ldots,f_n(x))$.
Then, $g\in\cF$, and from the chain rule of energy measures of conservative local Dirichlet forms (\cite[Theorem~3.2.2]{FOT}),
\begin{align*}
d\mu_{\la g\ra}
&=d\mu_{\la G(f_1,\ldots,f_n),G(f_1,\ldots,f_n)\ra}\\
&=\sum_{i,j=1}^n \frac{\partial G}{dx_i}(f_1,\ldots,f_n) \frac{\partial G}{dx_j}(f_1,\ldots,f_n) \,d\mu_{\la f_i,f_j\ra}\\
&=\sum_{i,j=1}^n \frac{\partial G}{dx_i}(f_1,\ldots,f_n) \frac{\partial G}{dx_j}(f_1,\ldots,f_n) z^{i,j}\,d\nu\\
&=((\nabla G)(f_1,\ldots,f_n),z(\nabla G)(f_1,\ldots,f_n))_{\R^n}\,d\nu.
\end{align*}
In particular,
\begin{align}\label{eq:gradient}
\cE(g,g)=\frac12\int_K((\nabla G)(f_1,\ldots,f_n),z(\nabla G)(f_1,\ldots,f_n))_{\R^n}\,d\nu.
\end{align}
If we set $\cE'(G,G)=\cE(g,g)$, $z'=z\circ\Phi^{-1}$, and $\nu'=\nu\circ\Phi^{-1}$,
\eqref{eq:gradient} is rewritten as
\[
\cE'(G,G)=\frac12\int_{\Phi(K)}((\nabla G)(y),z'(y)(\nabla G)(y))_{\R^n}\,\nu'(dy).
\]
Since the rank of $z'$ is one $\nu'$-a.s., $z'$ can be regarded as a ``Riemannian metric'' on $\Phi(K)$ and $\Phi(K)$ is considered to be a one-dimensional ``measure-theoretical Riemannian submanifold'' in $\R^n$.
This observation has been stated in \cite{Ki1} in the case of Sierpinski gaskets.

\end{document}